\documentclass[11pt]{article}
\usepackage[margin=1in]{geometry}
\usepackage{amsmath,amssymb,amsthm,mathtools}
\usepackage{hyperref}
\usepackage{microtype}
\usepackage{tikz}
\usetikzlibrary{fit,calc}
\hypersetup{colorlinks=true,linkcolor=blue,citecolor=blue,urlcolor=blue}

\newtheorem{theorem}{Theorem}[section]
\newtheorem{lemma}[theorem]{Lemma}
\newtheorem{conjecture}[theorem]{Conjecture}

\newtheorem{claim}[theorem]{Claim}

\newcommand{\ex}{\operatorname{ex}}

\title{Exact generalized Tur\'an number of vertex-disjoint paths of length two}
\author{%
\begin{tabular}{cc}
Qi Wu$^{*1}$
&
Long-Tu Yuan$^{\dagger2}$
\end{tabular}\\[0.8em]
\begin{tabular}{@{}c@{}}
\small $^{1}$ School of Mathematics and Statistics, Jiangsu Normal University, Xuzhou, Jiangsu 221116, China\\
\small $^{2}$ School of Mathematical Sciences, Key Laboratory of MEA (Ministry of Education),\\
\small Shanghai Key Laboratory of PMMP, East China Normal University, Shanghai 200241, China
\end{tabular}
}
\date{}

\begin{document}

\maketitle
\begingroup
\renewcommand{\thefootnote}{\fnsymbol{footnote}}
\footnotetext[1]{E-mail: wuqimath@163.com.}
\footnotetext[2]{E-mail: ltyuan@math.ecnu.edu.cn.}
\endgroup

\begin{abstract}
We determine the generalized Tur\'an number
of vertex-disjoint paths of length two and characterize all corresponding
extremal graphs. Our proof combines the Lov\'asz form of the Kruskal--Katona
theorem with a discrete convexity argument. 

\end{abstract}

\noindent\textbf{Keywords:} generalized Tur\'an number; clique; disjoint paths; extremal graph.

\medskip
\noindent\textbf{2020 Mathematics Subject Classification:} 05C35.

\medskip

\medskip
\pagestyle{plain}

\section{Introduction}


Classical Tur\'an theory asks how many edges a graph may have while avoiding a fixed subgraph.  The extremal problem for forbidden cliques goes back to
Tur\'an \cite{Turan1941}.  Zykov \cite{Zykov1949} proved the corresponding
clique-counting extension of Tur\'an's theorem, and Erd\H{o}s
\cite{Erdos1962} later obtained related results on the numbers of complete
subgraphs.  Related early work includes Bollob\'as's results \cite{Bollobas1976} on coverings by
complete subgraphs, the extremal subgraph-count results of
Gy\H{o}ri, Pach and Simonovits \cite{GyoriPachSimonovits1991}, and the theorem
of Moon and Moser \cite{MoonMoser1965} on the maximum number of maximal cliques.

Given a graph $H$, a graph $G$ is \emph{$H$-free} if it contains no copy of
$H$ as a subgraph.  Let $K_s$ denote the complete graph on $s$ vertices.  For graphs $T$ and
$F$, let $\ex(n,T,F)$ be the maximum number of copies of $T$ in an
$n$-vertex $F$-free graph.  A newer line of work asks the same question with
edges replaced by copies of another graph.  In this notation, the classical
problem is the case $T=K_2$, while the present paper concerns $T=K_s$.  Alon
and Shikhelman \cite{AlonShikhelman2016} set out this generalized Tur\'an
problem in a general form, and  later papers
\cite{GerbnerPalmer2019,Letzter2019} have obtained 
asymptotic results. For a recent survey, see Gerbner and Palmer
\cite{GerbnerPalmerSurvey2026}. Results of this kind often have a different shape from
the edge case, because a graph can lose many edges while keeping many cliques.

Let $P_3$ denote the path on three vertices, and let $kP_3$ denote the union of $k$ vertex-disjoint copies of $P_3$. A vertex of a graph is \emph{universal} if it is adjacent to every other vertex of the graph. For paths and linear forests there is a long edge-counting history.  Erd\H{o}s
and Gallai \cite{ErdosGallai1959} proved the basic theorem for long paths and cycles.  Gorgol \cite{Gorgol2011} asked for the Tur\'an number of disjoint copies of a graph, and Bushaw and Kettle \cite{BushawKettle2011} solved the problem for several paths when
$n$ is large; in particular they settled the large-$n$ case for $kP_3$.  Yuan and Zhang \cite{YuanZhang2017} then determined
$\ex(n,kP_3)$ and the corresponding extremal graphs for all positive integers
$n$ and $k$.    Their result is a central starting point for the present paper, because even in the edge-counting problem two different extremal mechanisms compete: one construction concentrates almost all edges on only
$3k-1$ vertices, while the other uses $k-1$ universal vertices and a matching on the remaining vertices.
Yuan and Zhang \cite{YuanZhang2021} subsequently studied broad families of
disjoint paths, obtaining exact Tur\'an numbers under suitable parity
conditions and for several additional odd-path cases.

The next natural question is whether the same structural graph survives when edges are replaced by cliques.  This is not a formal consequence of the
edge-counting theorem: a graph may have many fewer edges but still keep many copies of $K_s$, and the two edge-extremal constructions may contribute very
different numbers of large cliques.  Clique-counting versions of related
problems have been studied for cycles \cite{Luo2018}, matchings \cite{Wang2020} and linear forests \cite{ZhangWangZhou2022,ZhuChen2022,ZhuZhangChen2021}.
For disjoint forbidden graphs, one recurrent construction \cite{GerbnerMethukuVizer2019} is to take $k-1$ universal vertices and put a suitable forbidden-free graph on the remaining
vertices.  In the case of $kP_3$, this gives the
join of a $(k-1)$-clique and a maximum matching.  The competing construction
is the disjoint union of a $(3k-1)$-clique and a maximum matching.

For a graph $G$, let $V(G)$ and $E(G)$ denote its vertex set and edge set,
respectively.  Let $N(K_s,G)$ denote the number of
$s$-cliques in a graph $G$. Let $e(G)$ denote the number of edges of $G$. For two vertex-disjoint graphs $G_1$ and $G_2$, let
$G_1\cup G_2$ denote their vertex-disjoint union, and let $G_1+G_2$
denote their join, obtained from $G_1\cup G_2$ by adding all edges between $V(G_1)$ and $V(G_2)$.  For $t\ge 1$, let $M_t$ denote a matching on
$t$ vertices, with one isolated vertex when $t$ is odd.  Thus every
edge-maximal $P_3$-free graph on $t$ vertices is isomorphic to $M_t$.

We now introduce a family of constructions (see Figure~\ref{fig:Hq}).  For
integers $n\ge 3k$ and $0\le q\le k-1$, define
$$H(n,k,q):=K_q+\bigl(K_{3(k-q)-1}\cup M_{n-3k+2q+1}\bigr).$$
If $q=0$, then
$H(n,k,0)=K_{3k-1}\cup M_{n-3k+1}$
and if $q=k-1$, then
$$H(n,k,k-1)=K_{k-1}+\bigl(K_2\cup M_{n-k-1}\bigr)
 \cong K_{k-1}+M_{n-k+1}.$$

\begin{figure}[ht]
\centering
\begin{tikzpicture}[
  x=0.88cm,y=0.88cm,
  clique/.style={circle,draw=black,fill=white,minimum size=1.72cm,
                 inner sep=1pt,font=\footnotesize,align=center,line width=0.6pt},
  vertex/.style={circle,fill=black,inner sep=1.15pt},
  mbox/.style={draw=black!45,rounded corners=4pt,inner sep=4pt},
  jedge/.style={draw=black!38,line width=0.42pt},
  title/.style={font=\normalsize},
  lab/.style={font=\small}
]
\node[title] at (-4.15,2.15) {$H(n,k,q)$};
\node[clique] (hq) at (-4.15,0.85) {$K_q$};
\node[clique,minimum size=1.95cm] (hc) at (-6.35,-1.00) {$K_{3(k-q)-1}$};

\node[vertex] (hm1) at (-2.42,-0.47) {};
\node[vertex] (hm2) at (-1.82,-0.47) {};
\node[vertex] (hm3) at (-2.42,-1.53) {};
\node[vertex] (hm4) at (-1.82,-1.53) {};
\node[lab] (hdots) at (-2.12,-1.00) {$\vdots$};
\node[mbox,fit=(hm1)(hm2)(hm3)(hm4)(hdots)] (hmbox) {};

\foreach \a/\b in {205/55,225/35,245/15}
  \draw[jedge] (hq.\a)--(hc.\b);
\foreach \v in {hm1,hm2,hm3,hm4}
  \draw[jedge] (hq)--(\v);
\draw[line width=0.72pt] (hm1)--(hm2);
\draw[line width=0.72pt] (hm3)--(hm4);
\node[lab] at (-2.12,-2.13) {$M_{n-3k+2q+1}$};

\node[title] at (4.15,2.15) {$H(n,k,k-1)$};
\node[clique] (gc) at (2.55,-0.75) {$K_{k-1}$};

\node[vertex] (gm1) at (5.55,-0.22) {};
\node[vertex] (gm2) at (6.15,-0.22) {};
\node[vertex] (gm3) at (5.55,-1.28) {};
\node[vertex] (gm4) at (6.15,-1.28) {};
\node[lab] (gdots) at (5.85,-0.75) {$\vdots$};
\node[mbox,fit=(gm1)(gm2)(gm3)(gm4)(gdots)] (gmbox) {};

\foreach \v in {gm1,gm2,gm3,gm4}
  \draw[jedge] (gc)--(\v);
\draw[line width=0.72pt] (gm1)--(gm2);
\draw[line width=0.72pt] (gm3)--(gm4);
\node[lab] at (5.85,-1.88) {$M_{n-k+1}$};
\end{tikzpicture}
\caption{The construction $H(n,k,q)$.}
\label{fig:Hq}
\end{figure}

For $s\ge 2$, write
$$h_s(n,k,q):=N\bigl(K_s,H(n,k,q)\bigr).$$
In particular, since $N(K_2,F)=e(F)$ for every graph $F$,
$$h_2(n,k,0)=e\bigl(H(n,k,0)\bigr),
\qquad
h_2(n,k,k-1)=e\bigl(H(n,k,k-1)\bigr).$$

For a graph $H$, let $\ex(n,H)$ be the maximum number of edges in an
$n$-vertex $H$-free graph. For the edge-counting case, Bushaw and Kettle \cite{BushawKettle2011} determined
$\ex(n,kP_3)$ when $n$ is sufficiently large. Campos and Lopes \cite{CamposLopes2018} and Yuan and Zhang
\cite{YuanZhang2017} independently determined
$\ex(n,kP_3)$ for all positive integers $n$ and $k$.
Moreover, Yuan and Zhang characterized all extremal graphs.
\begin{theorem}\cite{YuanZhang2017}\label{thm:YZ}
Let $n\ge 3k$. Then
$$
\ex(n,kP_3)=\max\{h_2(n,k,0),h_2(n,k,k-1)\}.
$$
Moreover, the extremal graph is $H(n,k,0)$ when
$3k\le n<5k-1$, the extremal graphs are $H(n,k,0)$ and $H(n,k,k-1)$ when
$n=5k-1$, and the extremal graph is $H(n,k,k-1)$ when $n>5k-1$.
\end{theorem}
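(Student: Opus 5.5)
Theorem~\ref{thm:YZ} is the edge-counting result of Yuan and Zhang, quoted here as the base case; a self-contained proof would go by induction on $k$, as follows.

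\emph{Base case.} For $k=1$ the statement is immediate. A $P_3$-free graph has maximum degree at most one, so it is a matching. Its largest form is $M_n$, and $H(n,1,0)=K_2\cup M_{n-2}\cong M_n$.

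\emph{Inductive step.} Let $k\ge 2$ and let $G$ be a $kP_3$-free graph on $n\ge 3k$ vertices with $e(G)=\ex(n,kP_3)$. By maximality, adding any edge creates a copy of $kP_3$, so we may assume $G$ contains $(k-1)P_3$. Choose a copy $F$ of $(k-1)P_3$ so as to maximize some auxiliary parameter, for instance the number of edges of $G-V(F)$. The graph $G-V(F)$ is $P_3$-free, hence a matching. We then count the edges between the $3k-3$ vertices of $V(F)$ and the rest $R$. For each path $xyz$ in $F$, consider the edges from $\{x,y,z\}$ to $R$. If some vertex of $R$ is adjacent to two vertices of the path, or if both endpoints $x$ and $z$ have neighbours in $R$ (together with a matching edge of $R$), then we can re-route and find $kP_3$. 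A path can send many edges into $R$ essentially only when a single vertex carries them all; such a vertex behaves like a universal vertex.

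\emph{Case split.} Let $q$ be the number of vertices of $V(F)$ with large degree into $R$, say degree greater than some threshold of order $k$.
\begin{itemize}
\item If $q\ge 1$, delete one such high-degree vertex $v$. The graph $G-v$ is $(k-1)P_3$-free: otherwise $v$ together with two of its many neighbours avoiding a $(k-1)P_3$ gives $kP_3$. Induction applied to $G-v$ then yields $e(G)\le (n-1)+\ex(n-1,(k-1)P_3)$.
\item If $q=0$, then $e(V(F),R)=O(k^2)$, and the total is at most $\binom{3k-1}{2}+\lfloor (n-3k+1)/2\rfloor$ after careful bookkeeping.
\end{itemize}

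\emph{Comparing the bounds.} Iterating the first case shows that the maximum is attained by some $h_2(n,k,q)$. A direct computation shows that $h_2(n,k,q)$ is convex in $q$: it is quadratic with positive leading coefficient, since the clique $K_{3(k-q)-1}$ shrinks quadratically while the universal part grows only linearly. So the maximum over $q$ is at an endpoint, $q=0$ or $q=k-1$. Solving $h_2(n,k,0)=h_2(n,k,k-1)$ gives the threshold $n=5k-1$. Characterising the extremal graphs follows by tracking equality in each step.

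\emph{Main obstacle.} The main difficulty is the $q=0$ case, specifically showing exactly that the edges between $F$ and $R$ cannot exceed what a $(3k-1)$-clique provides for small $n$. This needs a delicate local analysis of each path $xyz$ and of how the $R$-matching interacts with it. Near the threshold $n\approx 5k$, the mixed configurations with $0<q<k-1$ must be excluded exactly rather than asymptotically.
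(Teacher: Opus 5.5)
The paper gives no proof of this statement. It is quoted from Yuan and Zhang (and Campos--Lopes) and used only as a black box to bound $e(H)$ in \eqref{eq:1}. Judged on its own, your outline has a genuine gap.

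The peripheral steps are fine:
\begin{itemize}
\item If $v$ has at least $3k-1$ neighbours, then $G-v$ is $(k-1)P_3$-free.
\item One checks $(n-1)+h_2(n-1,k-1,q')=h_2(n,k,q'+1)$.
\item $h_2(n,k,q)$ is a quadratic in $q$ with leading coefficient $4$, so internal $q$ lose strictly.
\item $h_2(n,k,k-1)-h_2(n,k,0)$ is linear in $n$ with slope $k-1$ and vanishes at $n=5k-1$.
\end{itemize}
But all the content of the theorem lies in your case $q=0$, where you offer only ``careful bookkeeping''. The naive count available there is $\binom{3k-3}{2}+(3k-3)(3k-2)+\lfloor (n-3k+3)/2\rfloor$. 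Its quadratic term is about three times that of the target $\binom{3k-1}{2}+\lfloor (n-3k+1)/2\rfloor$. The target is also an exact inequality, and you would need its equality case for the characterisation of extremal graphs.

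The local principle you plan to base the bookkeeping on is false. In $H(n,k,0)$ every $P_3$ lies inside the $K_{3k-1}$. So for every choice of $F$, the set $R$ contains two clique vertices $u,v$, adjacent to each other and to all three vertices of every path of $F$. Thus both of your ``re-routing'' configurations occur in the extremal graph itself, with no $kP_3$:
\begin{itemize}
\item an $R$-vertex adjacent to two vertices of a path;
\item both endpoints having neighbours in $R$ that span a matching edge.
\end{itemize}
Any valid local lemma must tolerate six edges from each path into $R$. The bound can then only come from a global argument about how these edges from different paths interact; in the extremal graph they all go to the same pair $u,v$. That argument is what is missing.

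The case split is also mis-specified. Suppose $q$ counts vertices of $F$ with at least $3k-1$ neighbours in $R$, which is the least the deletion step needs. Take $G=H(25,5,4)=K_4+M_{21}$. Every $4P_3$ uses all four universal vertices, and each has exactly $13<14$ neighbours in $R$. So $q=0$, yet $e(G)=100>96=h_2(25,5,0)$.

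Hence the $q=0$ branch must be phrased with total degree: a $kP_3$-free graph with $\Delta(G)\le 3k-2$ has at most $h_2(n,k,0)$ edges. Proving that is essentially the core of the Yuan--Zhang theorem, not bookkeeping.
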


Chen, Yang, Yuan and Zhang \cite{ChenYangYuanZhang2024} proved exact generalized Tur\'an
results for even linear forests. The first odd path case is already the path
$P_3$, and it is the smallest case in which the two constructions above must
be compared at the level of clique counts.  For $t\ge 0$, let $I_t$ denote
the edgeless graph on $t$ vertices.  Motivated by the preceding edge result,
they proposed the following conjecture.

\begin{conjecture}\cite{ChenYangYuanZhang2024}\label{conj:CYYZ}
Let $n\ge 3k$ and $s\ge 3$. Then
$$\ex(n,K_s,kP_3)=
\max\{h_s(n,k,0),h_s(n,k,k-1)\}.$$
Moreover, if $3\le s\le 3k-1$, then every extremal graph $G$ satisfies either $G\subseteq H(n,k,k-1)$, or
$K_{3k-1}\cup I_{n-3k+1}\subseteq G\subseteq H(n,k,0)$.
\end{conjecture}

Gao, Li, Lu, Sun and Yuan \cite{GaoLiLuSunYuan2026} proved the corresponding
assertion in several cases, including $k=2,3$, $s\ge k+2$, and all sufficiently
large $n$.
In this paper we prove the conjecture for all $n,k$ and all $s\ge 3$.
Since every copy of $K_s$ with $s\ge 3k$ contains $k$ vertex-disjoint copies of $P_3$, every $kP_3$-free graph has no $K_s$ in this range.  Thus the numerical problem is trivial for $s\ge3k$, whereas the nontrivial extremal characterization concerns $3\le s\le3k-1$.  The following is our main result.

\begin{theorem}\label{thm:main}
 Conjecture~\ref{conj:CYYZ} is true.
\end{theorem}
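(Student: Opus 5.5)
We argue by induction on $k$, with a structural analysis of a $kP_3$-free graph $G$ on $n\ge 3k$ vertices that maximizes $N(K_s,G)$. The base case $k=1$ is immediate: a $P_3$-free graph is a matching, so for $s\ge 3$ there are no triangles and both sides vanish. For $k\ge 2$, first assume $G$ contains a copy of $(k-1)P_3$ and fix a largest family $\mathcal P$ of vertex-disjoint $P_3$'s. Then $|\mathcal P|=k-1$, and $G-V(\mathcal P)$ is $P_3$-free, hence a matching.

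\emph{Structural step.} We then use augmenting arguments in the spirit of Yuan--Zhang, extending them from edges to cliques. Every $K_s$ with $s\ge3$ has at most $2$ vertices in the matching part $R=V(G)\setminus V(\mathcal P)$. Let $X\subseteq V(\mathcal P)$ be the set of vertices with many neighbours in $R$, say at least $2k$ or more. A standard switching argument shows that each $P_3$ in $\mathcal P$ contains at most one vertex of $X$; otherwise we could build $k$ disjoint $P_3$'s. Set $q=|X|$. The key structural claim is that, after deleting $X$, the graph $G-X$ is $(k-q)P_3$-free. Moreover, its cliques that meet $R$ essentially live on matching edges, while everything else sits on the $3(k-q)-1$ remaining vertices of $V(\mathcal P)\setminus X$, up to a bounded number of exceptional vertices. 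This yields a bound of the form
\[
N(K_s,G)\le \sum_{j}\binom{q}{j}\,N(K_{s-j},G-X)\le f_s(q),
\]
where $f_s(q)$ counts the $s$-cliques of $H(n,k,q)$. To bound the clique count on the dense part, we use the Lov\'asz form of Kruskal--Katona. If the dense part has $e=\binom{x}{2}$ edges with $x$ real, then it has at most $\binom{x}{s}$ copies of $K_s$. This lets us trade edges for cliques when the dense part is not a full clique. The edge count of that part is in turn controlled by Theorem~\ref{thm:YZ} applied with $k-q$ in place of $k$.

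\emph{Convexity step.} Next we show that $h_s(n,k,q)$, viewed as a function of $q\in\{0,\dots,k-1\}$, is discretely convex, or at least that its forward differences change sign at most once, from negative to positive. Writing
\[
h_s(n,k,q)=\binom{3k-q-1}{s}+(n-3k+2q+1)\binom{q}{s-1}+\Bigl\lfloor\tfrac{n-3k+2q+1}{2}\Bigr\rfloor\binom{q}{s-2},
\]
the first term is convex and decreasing in $q$, and the remaining terms are convex and increasing. A direct difference computation then shows that the maximum over $q$ is attained only at $q\in\{0,k-1\}$. We must also check that the intermediate values are strictly smaller, which is what makes the characterization of extremal graphs possible. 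The same convexity has to be run for the relaxed bound $f_s$ with the real-valued Kruskal--Katona parameter, so we set up $f_s$ carefully to remain convex in the relevant variables.

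\emph{Equality and the main obstacle.} For the extremal characterization we trace equality through the argument. Equality forces $q\in\{0,k-1\}$ and equality in Kruskal--Katona, so the dense part is a clique. When $q=0$ this gives $K_{3k-1}\subseteq G$, and since $G$ is $kP_3$-free the rest is a matching, so $G\subseteq H(n,k,0)$. When $q=k-1$, the $k-1$ high-degree vertices are universal for the clique count and the rest forms $M_{n-k+1}$. The case where $G$ has no $(k-1)P_3$ is handled by induction, because $h_s(n,k-1,\cdot)<h_s(n,k,\cdot)$. I expect the main obstacle to be the structural step. The difficulty is showing rigorously that low-degree vertices of $V(\mathcal P)$ cannot carry many cliques through $R$, and that the leftover graph on $V(\mathcal P)\setminus X$ really behaves like a $(3(k-q)-1)$-vertex graph. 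My first attempt would be a careful local-switching lemma for each triple of $\mathcal P$, bounding the degree into $R$ of non-$X$ vertices by a constant and absorbing these contributions into the slack of the convexity inequality. This absorption may fail for small $n$ near $3k$, and there a separate direct count would be needed.
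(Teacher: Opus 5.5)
\textbf{Verdict: there is a genuine gap.} The gap is your structural step, which carries essentially all of the difficulty and is only asserted.

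Nothing in your plan controls the cliques through vertices of $V(\mathcal P)\setminus X$ that have fewer than threshold-many neighbours in $R$. Absorbing such contributions ``into the slack'' cannot work. At $q=0$, in the range where $H(n,k,0)$ is extremal, the target $\binom{3k-1}{s}$ is attained, so there is no slack and the residual bound must be exact (likewise at $q=k-1$). Stability arguments of this kind suit large $n$, where the conjecture was already known, but the theorem concerns every $n\ge 3k$. The bookkeeping is also off: $V(\mathcal P)\setminus X$ has $3k-3-q$ vertices, not $3(k-q)-1$, and in $H(n,k,0)$ two vertices of the $K_{3k-1}$ lie in $R$.

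The counting mechanism does not close either. Theorem~\ref{thm:YZ} with $k-q$ in place of $k$ applies to a graph such as $G-X$ on $m\ge 3(k-q)$ vertices. When its value is the $K_{\ell-1}+M$ one (about $(\ell-1)m$ edges), Lov\'asz--Kruskal--Katona turns this into a bound of order $(\ell m)^{s/2}$ on copies of $K_s$. That is far above $\binom{3\ell-1}{s}$.

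The missing idea is the criticality reduction. Take the extremal $G$ edge-maximal, hence $kP_3$-saturated. Adding a missing edge at a non-universal $v$ creates a $kP_3$ whose other $k-1$ paths avoid $v$. So $v$ is not critical (Lemma~\ref{lem:promotion}), and $H=G-W$ is $\ell P_3$-critical (Lemma~\ref{lem:residual-critical}). This then yields an exact bound component by component:
\begin{itemize}
\item In a component $D$ with $\nu_3(D)=a$, criticality gives maximum degree at most $3a+1$.
\item Lemma~\ref{lem:charging}, applied to $D[S]$ for a fixed packing $S$ of $a$ paths, gives $N(K_3,D)\le\binom{3a+2}{3}$.
\item Theorem~\ref{thm:lovasz-kk} lifts this to $\binom{3a+2}{r}$ for every $r\ge 3$.
\item Lemma~\ref{lem:merge} combines the components into $\binom{3\ell-1}{r}$.
\end{itemize}
Theorem~\ref{thm:YZ} is then needed only for $e(H)$, which enters $N(K_s,G)$ through the single term $\binom{q}{s-2}e(H)$.

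Your convexity step also needs repair. The first term of $h_s(n,k,q)$ is $\binom{3k-2q-1}{s}$, since $K_q+K_{3(k-q)-1}=K_{3k-2q-1}$. More importantly, convexity of $h_s$ in $q$ (Claim~\ref{clm:H-count}) settles only the case $e(H)\le h_2(m,\ell,0)$.

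When $e(H)\le h_2(m,\ell,\ell-1)$, the bound you actually obtain is the paper's $g_s(n,k,q)$. It mixes $\binom{3\ell-1}{s-j}$ with the edge count of $K_{\ell-1}+M_{m-\ell+1}$, and it is not convex. For $s=3$ its second differences in $q$ equal $-n+O(k)$. So for large $n$, which is precisely where this case arises, it is concave and increasing; it cannot be ``set up to remain convex''.

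The paper instead writes the differences of $g_s$ as $A_h-t\,r_h$ with $A_h,r_h\ge 0$ and $t=\lfloor (n-k+1)/2\rfloor$. It proves that the threshold $A_h/r_h$ is nondecreasing, so the differences change sign at most once (Claim~\ref{clm:fan}). It then adds a separate endpoint computation for $s=3$ to get the strict inequality at internal $q$ that the characterization of extremal graphs needs.
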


We now give a short outline of the proof of Theorem~\ref{thm:main}.  We first
take an extremal graph which is also edge-maximal among all $kP_3$-free graphs. We show deleting universal vertices, the remaining graph $H$ is $\ell P_3$-critical. The number of $K_s$ in $H$ is bounded by Lemma \ref{lem:local}. The proof of this estimate starts from a triangle bound and then
passes to larger cliques by the Lov\'asz--Kruskal--Katona theorem.  Finally, writing $m=|V(H)|$, Theorem~\ref{thm:YZ} reduces the global problem to two cases, according as
$e(H)\le h_2(m,\ell,0)$ or $e(H)\le h_2(m,\ell,\ell-1)$.  The two resulting algebraic comparisons show that only the two endpoint constructions $H(n,k,0)$ and $H(n,k,k-1)$ can be extremal.

\section{Auxiliary lemmas for \texorpdfstring{$kP_3$}{kP3}-critical graphs}

This section gives the auxiliary lemmas needed for the proof of Theorem~\ref{thm:main}.  Let $\nu(G)$ denote the matching number of $G$. A connected graph $G$ is \emph{$k$-matching-critical} if
$\nu(G-S)<\nu(G)$ for every $S\subseteq V(G)$ with $|S|=k$. A vertex $v$ is a \emph{matching-critical vertex} if $\nu(G-v)<\nu(G)$.  Thus a matching-critical vertex is precisely a vertex
covered by every maximum matching of $G$.

The $k$-matching-critical condition is naturally expressed in terms of the matching deficiency
$\operatorname{def}(G)=|V(G)|-2\nu(G)$, the number of vertices left uncovered by a maximum matching.  In fact, the
definition above is equivalent to $\operatorname{def}(G)<k$.  Berge's
 formula \cite{Berge1958} provided the basic characterization of matching deficiency
through the odd components of vertex-deleted subgraphs.
Gallai \cite{Gallai1964} subsequently developed foundational structural
results on maximum matchings and factor-critical graphs.  Plummer and Saito \cite{PlummerSaito2005} later obtained sharp bounds on
$\operatorname{def}(G)$ for connected  graphs containing no induced copy of $K_{1,r}$, where $K_{1,r}$ is the complete bipartite graph with part sizes $1$ and $r$.  

A matching is  vertex-disjoint copies of $P_2$.
We now consider similar definition for $P_3$.  For a graph $G$, let
$\nu_3(G)=\max\{k:kP_3\subseteq G\}.$
The direct analogue of a matching-critical vertex is a
\emph{$P_3$-critical vertex}: a vertex $v\in V(G)$ such that
$\nu_3(G-v)<\nu_3(G)$.  

For the structural reduction below, we also use a graph-level  notion.
For $t\ge 1$, a graph $G$ is \emph{$tP_3$-critical} if
$\nu_3(G)=t-1$ and $\nu_3(G-v)=t-1$ for every $v\in V(G)$.  Equivalently,
$G$ is $tP_3$-free and $G-v$ contains $t-1$ vertex-disjoint copies of $P_3$
for every $v\in V(G)$.  This graph-level notion is distinct from that of a
$P_3$-critical vertex.  In what follows, a $P_3$-critical vertex will be
called simply a \emph{critical vertex}, or just \emph{critical}; the  term
$tP_3$-critical will be retained for the graph-level notion.

  An $F$-free graph is
\emph{$F$-saturated} if adding any missing edge creates a copy of $F$.

\begin{lemma}\label{lem:promotion}
Let $G$ be a $kP_3$-saturated graph. For $v\in V(G)$, if $v$ is critical, then $v$
is universal.
\end{lemma}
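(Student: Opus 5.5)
The argument is a direct exchange: I will add a missing edge at $v$, use saturation to get $k$ disjoint copies of $P_3$, and discard the one copy that uses the new edge. The remaining $k-1$ copies avoid $v$, which will contradict criticality.

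Suppose for contradiction that $v$ is critical but not universal, and fix a vertex $u\neq v$ with $uv\notin E(G)$. Since $G$ is $kP_3$-saturated, $G$ is $kP_3$-free, so $\nu_3(G)\le k-1$. On the other hand, $G+uv$ contains $k$ vertex-disjoint copies $P^{(1)},\dots,P^{(k)}$ of $P_3$. Because $G$ itself is $kP_3$-free, the new edge $uv$ must lie in one of them, say $P^{(k)}$.

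The other $k-1$ copies $P^{(1)},\dots,P^{(k-1)}$ use only edges of $G$. They are vertex-disjoint from $P^{(k)}$, which contains both $u$ and $v$, so none of them meets $v$. Hence they are $k-1$ vertex-disjoint copies of $P_3$ in $G-v$, giving $\nu_3(G-v)\ge k-1$. Combined with $\nu_3(G-v)\le\nu_3(G)\le k-1$, this yields $\nu_3(G-v)=\nu_3(G)=k-1$. This contradicts the assumption that $v$ is critical, i.e.\ that $\nu_3(G-v)<\nu_3(G)$. Therefore $v$ is adjacent to every other vertex, so $v$ is universal.

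There is no real obstacle. The only points to handle with care are the use of $kP_3$-freeness of $G$, to force $uv$ into one of the $k$ copies, and the observation that the copy containing $uv$ absorbs $v$, so the remaining copies survive in $G-v$. The same argument incidentally shows that $\nu_3(G)=k-1$ whenever $G$ has a non-edge.
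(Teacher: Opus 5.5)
Your proposal is correct and follows the paper's proof essentially verbatim: add a missing edge $uv$, use saturation to obtain $k$ disjoint copies of $P_3$ in $G+uv$, note that the copy containing $uv$ also contains $v$, and conclude that the other $k-1$ copies lie in $G-v$, forcing $\nu_3(G-v)=\nu_3(G)=k-1$. This contradicts the criticality of $v$.
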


\begin{proof}
Suppose on the contrary that $v$ is not
universal, and choose $u\in V(G)$ such that $uv\notin E(G)$. Since $G$ is
$kP_3$-saturated, $G+uv$ contains $k$ vertex-disjoint copies of $P_3$. At least one of these paths uses the new edge $uv$, and hence contains $v$. The
other $k-1$ paths lie in $G-v$, so $\nu_3(G-v)\ge k-1$. Since $G$ is $kP_3$-free, $\nu_3(G)\le k-1$. Thus
$\nu_3(G-v)=\nu_3(G)=k-1$, a contradiction to the assumption that $v$ is critical.
\end{proof}

The following lemma gives the structural reduction that will be used at the
start of the main proof.

\begin{lemma}\label{lem:residual-critical}
Let $n\ge 3k$, and let $G$ be an $n$-vertex 
$kP_3$-saturated graph. Let $W$ be the set of universal vertices of $G$,
put $q=|W|$, $H=G-W$, and $\ell=k-q$. Then $0\le q\le k-1$,
$G=K_q+H$, and $H$ is $\ell P_3$-critical.
\end{lemma}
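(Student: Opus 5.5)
We will establish the three assertions in turn, the first two being nearly immediate. Since $W$ is the set of universal vertices, every vertex of $W$ is adjacent to all other vertices, so $G=K_q+H$ with $H=G-W$. For the bound on $q$: if $q\ge k$, then $k$ universal vertices $w_1,\dots,w_k$ together with $2k$ further vertices (available since $n\ge 3k$) yield $k$ disjoint paths $x_iw_iy_i$, contradicting $kP_3$-freeness. Hence $0\le q\le k-1$, and $\ell=k-q\ge 1$.

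Next we show $\nu_3(H)=\ell-1$. First, $\nu_3(H)\le \ell-1$: if $H$ contained $\ell$ disjoint copies of $P_3$, these would use $3\ell$ vertices of $H$. We could then add $q$ paths, each centred at a vertex of $W$ with two further endpoints taken from the remaining $|V(H)|-3\ell=n-q-3\ell$ vertices, provided $n-q-3\ell\ge 2q$. This inequality is exactly $n\ge 3k$. The result would be $kP_3$ in $G$, a contradiction. For the lower bound and for criticality, we combine saturation with Lemma~\ref{lem:promotion}. No vertex $v\in V(H)$ is universal in $G$, so by Lemma~\ref{lem:promotion} $v$ is not critical in $G$, i.e.\ $\nu_3(G-v)=\nu_3(G)$. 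Moreover, $\nu_3(G)=k-1$, because $G$ is saturated and not complete: if $G$ were complete then $n\ge3k$ would give $kP_3$. Adding a missing edge therefore creates $kP_3$, and deleting the path through the new edge leaves $k-1$ disjoint paths in $G$.

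It remains to transfer $\nu_3(G-v)=k-1$ to $\nu_3(H-v)\ge \ell-1$, and this is the step requiring care. Take $k-1$ disjoint copies of $P_3$ in $G-v$. Each vertex of $W$ lies in at most one of them, so at most $q$ of these paths meet $W$. The remaining at least $k-1-q=\ell-1$ paths lie entirely in $H-v$. Hence $\nu_3(H-v)\ge\ell-1$, and since $\nu_3(H-v)\le\nu_3(H)\le\ell-1$, we obtain equality, which also gives $\nu_3(H)=\ell-1$.

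Two degenerate cases must be checked. One is $H$ empty, which cannot occur since $n\ge 3k>q$. The other is $\ell=1$, where the condition reads $\nu_3(H)=0$, which is fine. This completes the argument that $H$ is $\ell P_3$-critical. The only subtle points are the counting $n-q-3\ell\ge 2q\iff n\ge 3k$ and remembering to justify $\nu_3(G)=k-1$ from saturation before invoking Lemma~\ref{lem:promotion}.
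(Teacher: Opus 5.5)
Your proof is correct and matches the paper's argument step for step. It uses the same count $n-q-3\ell=n-3k+2q\ge 2q$ to show $H$ is $\ell P_3$-free, and the same appeal to Lemma~\ref{lem:promotion} together with the observation that at most $q$ of the $k-1$ paths in $G-v$ meet $W$; your explicit derivation of $\nu_3(G)=k-1$ from saturation also fills in a point the paper leaves implicit.
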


\begin{proof}
Clearly, $G=K_q+H$. If $q\ge k$, choose $k$ vertices of $W$ as the
centers of $k$ disjoint copies of $P_3$ and use $2k$ further vertices as their endpoints; this is possible because $n\ge 3k$. This contradicts the
assumption that $G$ is $kP_3$-free. Hence $0\le q\le k-1$.

We first show that $H$ is $\ell P_3$-free. Otherwise, $H$ contains $\ell$
vertex-disjoint copies of $P_3$, using $3\ell=3(k-q)$ vertices. The number of unused vertices of $H$ is
$$|H|-3\ell=(n-q)-3(k-q)=n-3k+2q\ge 2q.$$
Together with the $q$ universal vertices in $W$, these unused vertices form $q$ further disjoint copies of $P_3$, giving a copy of $kP_3$ in $G$, a
contradiction.

Now let $x\in V(H)$. Since $x\notin W$, the vertex $x$ is nonuniversal in
$G$. By Lemma~\ref{lem:promotion}, $x$ is not critical. Since
$G-x$ is a subgraph of $G$, it follows that
$\nu_3(G-x)=\nu_3(G)=k-1$. In a family of $k-1$ vertex-disjoint copies of
$P_3$ in $G-x$, at most $q$ paths meet $W$. Therefore at least
$(k-1)-q=\ell-1$ paths lie entirely in $H-x$. Thus $H-x$ contains
$(\ell-1)P_3$ for every $x\in V(H)$, and consequently $H$ is
$\ell P_3$-critical.
\end{proof}

Denote by $d_G(u)$ the
degree of a vertex $u$ in a graph $G$. We next turn to the counting part of the argument. 

\begin{lemma}\label{lem:charging}
Let $F$ be a graph on $t$ vertices.  For $x\in V(F)$, define $c_F(x)=t+1-d_F(x)$.  Then
$$N(K_3,F)+\sum_{xy\in E(F)}\min\{c_F(x),c_F(y)\}+\sum_{x\in V(F)}\left\lfloor\frac{c_F(x)}2\right\rfloor\le \binom{t+2}{3}.
$$
Moreover, if equality holds and $t\ge 2$, then $F=K_t$.
\end{lemma}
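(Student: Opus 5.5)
The plan is to prove the inequality by induction on $t$, deleting a well-chosen vertex; the equality case will be tracked along the way. The base cases $t=0,1$ are immediate: for $t=1$ the left side is $\lfloor 2/2\rfloor=1=\binom33$.

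Before the induction, I would check that $K_t$ attains equality. There every $c$-value equals $2$, so the left side is
$$\binom t3+2\binom t2+t=\binom t3+t^2=\binom{t+2}{3}.$$
This tells us the induction step must be tight exactly when the deleted vertex is universal and $F-v=K_{t-1}$.

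For the step, write $\Phi(F)$ for the left-hand side and fix $v$ with $d=d_F(v)$. Let $N=N_F(v)$ and $R=V(F)\setminus(N\cup\{v\})$, so $|R|=t-1-d$. In $F':=F-v$, on $t-1$ vertices, we have
\begin{itemize}
\item $c_{F'}(u)=c_F(u)$ for $u\in N$, since both $t$ and $d(u)$ drop by one;
\item $c_{F'}(r)=c_F(r)-1$ for $r\in R$.
\end{itemize}
Hence $\Phi(F)-\Phi(F')$ consists of the following pieces:
\begin{itemize}
\item $e(F[N])\le\binom d2$ triangles through $v$;
\item the new edge terms $\sum_{u\in N}\min\{c_F(v),c_F(u)\}\le d(t+1-d)$;
\item $\lfloor c_F(v)/2\rfloor$;
\item for each edge of $F'$ meeting $R$, an increase of at most $1$ in its $\min$ term, and only when the $\min$ is attained at an $R$-endpoint;
\item for each $r\in R$, an increase of at most $1$ in $\lfloor c(r)/2\rfloor$, occurring only when $c_F(r)$ is even.
\end{itemize}
By induction $\Phi(F')\le\binom{t+1}3$, so it suffices to prove
$$\Phi(F)-\Phi(F')\le\binom{t+1}{3}+\binom{t+1}{2}-\binom{t+1}{3}=\binom{t+1}2 .$$
When $d=t-1$ we have $R=\emptyset$, and the bound reads $\binom{t-1}2+2(t-1)+1=\binom{t+1}2$. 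So this case is exactly tight, and equality forces $F[N]$ complete and $F'$ extremal. By induction $F'=K_{t-1}$, hence $F=K_t$.

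I would choose $v$ to be a vertex of \emph{maximum} degree. The point is that an edge $ru$ with $r\in R$ has its $\min$ at $r$ only if $d(r)\ge d(u)$. With $v$ of maximum degree, every $r\in R$ has $d(r)\le d$, so the edges meeting $R$ number at most $\sum_{r\in R}d(r)\le (t-1-d)d$. The total increment is then at most
$$\binom d2+d(t+1-d)+(t-1-d)d+(t-1-d)+\Bigl\lfloor\tfrac{t+1-d}2\Bigr\rfloor ,$$
and one must check that this is at most $\binom{t+1}2$, strictly when $d<t-1$. That strictness gives the equality characterization, since it forces $v$ universal, and then $F=K_t$ as above.

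The main obstacle is this last numerical comparison for $d<t-1$: the crude count $(t-1-d)d$ for the $R$-edges may be too generous. If the count fails, I would refine it in three ways.
\begin{itemize}
\item An edge $ru$ gains only if $c(r)\le c(u)$, so edges inside $R$ gain at most once each. This gives the bound $e(R)+e(R,N)$ rather than a degree sum.
\item Vertices of $N$ contribute to $e(F[N])$ and $e(R,N)$ from a common budget of at most $d-1$ further neighbours each. This lets one trade $\binom d2$ against $e(R,N)$.
\item The floor term for $r$ and the edge terms at $r$ compete for the same slack $c(r)=t+1-d(r)$.
\end{itemize}
If the vertex-deletion induction still leaves a gap, the fallback is an edge-addition monotonicity argument. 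One shows that adding a missing edge $xy$ does not decrease $\Phi$: the $|N(x)\cap N(y)|$ new triangles plus the new term $\min\{c(x),c(y)\}-1$ should pay for the unit losses on edges at $x,y$ and on the two floor terms. That reduces everything to the $K_t$ computation above.
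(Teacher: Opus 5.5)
Your proposal has a genuine gap: neither the vertex-deletion induction nor the fallback can be completed, and the refinements you list cannot rescue them.

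\textbf{The induction step is false.} First, your explicit bound already fails. With $k=t-1-d$, it exceeds $\binom{t+1}{2}$ by $dk+\lfloor k/2\rfloor-\binom{k+1}{2}$; for instance, the excess is $d-1$ when $k=1$. More seriously, the inequality your induction needs, $\Phi(F)-\Phi(F-v)\le\binom{t+1}{2}$, fails for some graphs \emph{for every} choice of $v$, so no sharper estimate of the increment can save it.

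Take $F=K_8$ minus a perfect matching, so $c\equiv 3$. Then $\Phi(F)=32+72+8=112\le\binom{10}{3}=120$. Deleting any vertex gives $F-v=K_1+K_{2,2,2}$, whose $c$-values are $2$ and $3$, and $\Phi(F-v)=20+48+7=75$. Hence $\Phi(F)-\Phi(F-v)=37>36=\binom{9}{2}$. The lemma holds for $F$, but $F-v$ has slack $9$ while $F$ has slack only $8$. A plain induction on the bound $\binom{t+2}{3}$ therefore cannot work; you would need a stronger, carefully chosen invariant, and you have not identified one.

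\textbf{The fallback is also false as stated.} Take two disjoint copies of $K_{1,4}$, so $t=10$. Then $\Phi=56+46=102$. Adding the edge between the two centres gives $\Phi=54+46=100$. The eight unit losses on the star edges are not paid for, because the centres have no common neighbours. So the proposal contains no argument that actually proves the inequality.

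\textbf{The missing idea is a global double count rather than a local one.} Write $h(x)=t-1-d_F(x)$ for the complement degree, so $c_F(x)=h(x)+2$. Let $m=e(\overline F)$, and let $M$ be the number of vertex triples containing a non-edge. Then $N(K_3,F)=\binom t3-M$ and $e(F)=\binom t2-m$, and the inequality becomes
$\sum_{xy\in E(F)}\min\{h(x),h(y)\}+\sum_x\lfloor h(x)/2\rfloor\le M+2m$.

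Bound each minimum by the average. This gives at most $\frac12\sum_x h(x)d_F(x)$. The sum $\sum_x h(x)d_F(x)$ counts paths $yxz$ with $xy\notin E(F)$ and $xz\in E(F)$. Each triple containing a non-edge contributes at most two such paths, so the sum is at most $2M$. The floor sum is at most $m$. This leaves slack at least $m$, which gives both the bound and the equality case $m=0$, that is, $F=K_t$.
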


\begin{proof}
Let $\overline F$ be the complement of $F$.  For each vertex $x$, let
$h(x)=d_{\overline F}(x)$ and  $m=e(\overline F)$.
Since $F$ has $t$ vertices, we have $d_F(x)+d_{\overline F}(x)=t-1$.
Therefore $c_F(x)=t+1-d_F(x)=t+1-(t-1-h(x))=h(x)+2$.

Let $M$ be the number of triples of vertices which contain at least one edge of $\overline F$.  A triple of vertices is a triangle in $F$ if and only if it
contains no edge of $\overline F$.  Hence $N(K_3,F)=\binom t3-M$. Also, $e(F)=\binom t2-m$.

Substituting $c_F(x)=h(x)+2$ into the desired inequality, we get
$$\binom t3-M+\sum_{xy\in E(F)}\min\{h(x)+2,h(y)+2\}
+\sum_x\left\lfloor \frac{h(x)+2}{2}\right\rfloor
\le
\binom{t+2}{3}.$$
This is equivalent to
$$\binom t3-M+\sum_{xy\in E(F)}\min\{h(x),h(y)\}+2e(F)
+\sum_x\left\lfloor \frac{h(x)}{2}\right\rfloor
+t\le \binom{t+2}{3}.$$
By $e(F)=\binom t2-m$ and
$\binom{t+2}{3}-\binom t3-t=2\binom t2$,
the preceding inequality is equivalent to
$$\sum_{xy\in E(F)}\min\{h(x),h(y)\}+
\sum_x\left\lfloor \frac{h(x)}{2}\right\rfloor
\le
M+2m.
$$
Thus it remains to prove this last inequality.

For the first term, we have
$\sum_{xy\in E(F)}\min\{h(x),h(y)\}
\le \frac{1}{2}\sum_{xy\in E(F)}(h(x)+h(y))$.
Since the term $h(x)$ appears once for each edge of $F$ incident
with $x$, we have $\sum_{xy\in E(F)}(h(x)+h(y))=\sum_x h(x)d_F(x)$.
So $\sum_{xy\in E(F)}\min\{h(x),h(y)\}
\le \frac{1}{2}\sum_x h(x)d_F(x)$.

  For a fixed vertex
$x$, the number $h(x)d_F(x)$ counts ordered triples $(y,x,z)$ such that
$xy$ is a non-edge of $F$ and $xz$ is an edge of $F$.  Now fix an unordered
triple $T$ of vertices.  If $T$ is a triangle of $F$, then it contributes nothing to this count.  If $T$ contains at least one non-edge of $F$, then,
among the three possible choices of the middle vertex, at most two choices can give one incident non-edge and one incident edge.  Hence only the $M$ triples
which contain a non-edge of $F$ contribute, and each of them contributes at
most two.  Thus $\sum_x h(x)d_F(x)\le 2M$.
Consequently, $\sum_{xy\in E(F)}\min\{h(x),h(y)\}\le M$.

Since $\sum_x h(x)=2e(\overline F)=2m$, we have
$\sum_x\left\lfloor \frac{h(x)}{2}\right\rfloor\le \frac{1}{2}\sum_x h(x)\le m$.
Hence
$$\sum_{xy\in E(F)}\min\{h(x),h(y)\}
+\sum_x\left\lfloor \frac{h(x)}{2}\right\rfloor
\le
M+m
\le
M+2m.$$
This proves the desired inequality.

It remains to discuss the equality case.  If equality holds in the stated
inequality, then equality also holds in the equivalent form
$$\sum_{xy\in E(F)}\min\{h(x),h(y)\}
+\sum_x\left\lfloor \frac{h(x)}{2}\right\rfloor
=M+2m.$$
  Hence $M+2m\le M+m$. So $m=0$.  Thus $e(\overline F)=m=0$.
So $F$ is the complete graph $K_t$.
\end{proof}

The preceding lemma supplies the required triangle estimate.  To extend this
estimate to larger cliques, we use the following
clique-count consequence of the Lov\'asz form \cite{Lovasz1993} of the
Kruskal--Katona theorem \cite{Katona1968,Kruskal1963}.
\begin{theorem}\cite{Katona1968,Kruskal1963,Lovasz1993}\label{thm:lovasz-kk}
Let $G$ be a graph, let $r\ge p\ge 1$ be integers, and let $x\ge r$ be real.
If
$N(K_p,G)\le \binom{x}{p},$
then
$N(K_r,G)\le \binom{x}{r}.$
\end{theorem}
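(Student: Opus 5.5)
This is the classical Lov\'asz version of the Kruskal--Katona theorem, specialised to the clique complex of $G$. The plan is to deduce it from the one-step shadow inequality. Let $\mathcal F$ be an $r$-uniform family with $|\mathcal F|=\binom{y}{r}$ for some real $y\ge r-1$. Then its shadow
$$\partial\mathcal F=\{A: |A|=r-1,\ A\subset B \text{ for some } B\in\mathcal F\}$$
satisfies $|\partial\mathcal F|\ge\binom{y}{r-1}$.

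\textbf{Proving the shadow inequality.} I would take this from \cite{Lovasz1993}. If a self-contained argument is wanted, I would use Frankl's short proof: first apply left-compressions (shifting), which do not increase the shadow. Then induct on $r$ and $|\mathcal F|$ by splitting the shifted family into the sets containing element $1$ and those not containing it. The real-parameter identity $\binom{y}{r}=\binom{y-1}{r}+\binom{y-1}{r-1}$ makes the induction close. This step is the only real content, and I regard it as the main obstacle; I would cite it rather than reprove it.

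\textbf{Iterating the inequality.} Apply it repeatedly, always with the same parameter $y$. If $|\mathcal F|=\binom{y}{r}$, then the $p$-th level shadow $\partial_p\mathcal F$ (all $p$-subsets of members of $\mathcal F$) satisfies $|\partial_p\mathcal F|\ge\binom{y}{p}$. The iteration is legitimate because $y\ge r-1\ge j-1$ at every intermediate level $j$. At each step, $\binom{y}{j}$ is an increasing function of $y$ on $[j-1,\infty)$, so a lower bound on a family's size can be rewritten as size $\binom{y'}{j}$ with $y'\ge y$ before applying the next step.

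\textbf{Applying it to $G$.} Let $\mathcal F$ be the family of vertex sets of $r$-cliques of $G$. Every $p$-subset of an $r$-clique is a $p$-clique, so
$$\partial_p\mathcal F\subseteq\{p\text{-cliques of }G\}, \qquad\text{hence}\qquad |\partial_p\mathcal F|\le N(K_p,G)\le \binom{x}{p}.$$
Suppose for contradiction that $N(K_r,G)>\binom{x}{r}$. Since $\binom{y}{r}$ is continuous and strictly increasing for $y\ge r-1$ and tends to infinity, there is a real $y>x$ with $N(K_r,G)=\binom{y}{r}$. The iterated shadow bound then gives
$$N(K_p,G)\ge\binom{y}{p}>\binom{x}{p}.$$
The strict inequality holds because $y>x\ge r\ge p$ and $\binom{\cdot}{p}$ is strictly increasing on $[p-1,\infty)$. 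This contradicts the hypothesis. The case $p=r$ is trivial, so the proof is complete apart from the cited shadow inequality.
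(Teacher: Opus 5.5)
Your argument is correct, and it is the standard derivation behind the paper's citation: the paper gives no proof and simply invokes the result, while you iterate Lov\'asz's shadow bound on the family of vertex sets of $r$-cliques and compare the $p$-th shadow with the $p$-cliques of $G$. One small correction is that the shadow inequality must assume real $y\ge r$ (equivalently, $\mathcal F\neq\emptyset$), because at $y=r-1$ the empty family gives $|\partial\mathcal F|=0<1=\binom{r-1}{r-1}$; this is harmless, since you only apply it with $y>x\ge r$, so every intermediate level $j$ satisfies $y\ge r\ge j$.
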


We now combine the structural criticality with the preceding two counting
tools.  Let $G$ be a graph and let $X$ be a subset of $V(G)$. Denote by
$N_{X}(u)$ the set of neighbors of a vertex $u$ in $X$, and write $G[X]$
for the subgraph of $G$ induced by $X$. If $a$ is a nonnegative integer, we put $\binom ab=0$ when $b<0$ or $b>a$.  The next lemma is the main local
estimate.

\begin{lemma}\label{lem:local}
Let $D$ be a connected $(a+1)P_3$-critical graph. Then for every $r\ge 3$,
$N(K_r,D)\le \binom{3a+2}{r}$.
If equality holds for some $r\ge 3$ and the right-hand side is positive, then
$D=K_{3a+2}$.

\end{lemma}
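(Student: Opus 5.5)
The plan is to prove the triangle case $r=3$ first, and then lift it to all $r\ge 3$ with Theorem~\ref{thm:lovasz-kk}, taking $p=3$ and $x=3a+2$. If $r>3a+2$, the right-hand side is $0$. The statement then holds because $D$ contains no $K_{3a+3}$: such a clique would contain $(a+1)P_3$, contradicting $\nu_3(D)=a$. So we may assume $3\le r\le 3a+2$. In this range $x=3a+2\ge r$, and the bound $N(K_3,D)\le\binom{3a+2}{3}$ gives $N(K_r,D)\le\binom{3a+2}{r}$. For the equality statement, my plan is to show that equality in the $K_r$ bound forces equality in the triangle bound. The route is to re-run the Lov\'asz argument, or to use the shadow inequality, which is strict unless the clique complex is a full simplex. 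After that, the triangle equality case gives $D=K_{3a+2}$. This transfer of equality back to $r=3$ will need some care. If it is awkward, I would instead prove the $K_r$ bound directly by the same partition argument described below, so that equality is tracked at each level.

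For the triangle bound, fix a maximum family of $a$ vertex-disjoint copies of $P_3$ in $D$, and let $F$ be the subgraph induced on their $t=3a$ vertices. Put $R=D-V(F)$. By maximality, $R$ is $P_3$-free, so $R$ is a matching together with isolated vertices. Every triangle of $D$ is then of one of three kinds: it lies inside $F$, it uses an edge $xy$ of $F$ and a common neighbour of $x$ and $y$ in $R$, or it uses a vertex $x\in V(F)$ and an edge of $R$ inside $N(x)$. A triangle cannot lie inside $R$. So I will aim to prove two local bounds. The first is that an edge $xy$ of $F$ has at most $\min\{c_F(x),c_F(y)\}$ common neighbours in $R$. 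The second is that a vertex $x\in V(F)$ sees at most $\lfloor c_F(x)/2\rfloor$ complete edges of $R$, where $c_F(x)=t+1-d_F(x)$. Once both hold, Lemma~\ref{lem:charging} applied to $F$ gives $N(K_3,D)\le\binom{t+2}{3}=\binom{3a+2}{3}$.

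The local bounds should come from exchange arguments, and this is the main obstacle. If $x$ has many neighbours in $R$, or sees many edges of $R$, then we can try to re-route. Either $x$ becomes the centre (or an end) of a new $P_3$ built from $R$-vertices, and its old path is repaired using a neighbour of its partners, or we directly get $a+1$ disjoint copies of $P_3$. The count of non-neighbours of $x$ in $F$, which is $t-1-d_F(x)=c_F(x)-2$, measures how often such a repair fails. Heuristically, $x$ can have at most about $c_F(x)$ useful neighbours in $R$, plus a constant slack; this matches the $+2$ in $c_F$. I would organise the case analysis by the position of $x$ in its path, centre or end. Criticality enters in the following way: $D-v$ still contains $aP_3$ for every $v$, and $D$ is connected. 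These facts are needed to exclude configurations where $R$ is large but attached only to a few vertices of $F$, such as a pendant-like structure. If the raw exchange bound turns out to be off by a constant, I would first choose the maximum family to maximise $e(F)$ or the number of triangles inside $F$, so that additional swaps are ruled out.

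Finally, suppose equality holds in the triangle bound. Then equality holds in Lemma~\ref{lem:charging}, so $F=K_{3a}$, and each of the two local bounds is attained with $c_F\equiv 2$. This means every edge of $F$ has exactly two common neighbours in $R$, and no vertex of $F$ sees an edge of $R$. I would then show that the two common neighbours are the same two vertices $u,w$ for all edges; otherwise some exchange produces $(a+1)P_3$. Criticality and connectivity then force $R=\{u,w\}$ with $u$ and $w$ joined to all of $F$ and to each other. Hence $D=K_{3a+2}$.
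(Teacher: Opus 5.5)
Your skeleton matches the paper's. You split $V(D)$ into the $3a$ vertices $S$ of $a$ disjoint $P_3$'s and the $P_3$-free remainder $R$. You bound each type of triangle by the two local quantities, apply Lemma~\ref{lem:charging} with $t=3a$, and lift with Theorem~\ref{thm:lovasz-kk}. Your treatment of $r>3a+2$ is fine, and so is the equality transfer (via the shadow, or as in the paper by writing $N(K_3,D)=\binom{x}{3}$ with $r\le x<3a+2$).

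The gap is the step you yourself call the main obstacle: the two local bounds are never proved. The exchange arguments you sketch cannot prove them, because they only use the maximality of the fixed family, i.e.\ $\nu_3(D)=a$. Without criticality the bounds are false. Take $D=K_2+I_m$, an edge $xy$ with $m\ge 4$ common neighbours. It is connected with $\nu_3(D)=2$, and every admissible $S$ consists of $x$, $y$ and four of the other vertices. So $c_F(x)=c_F(y)=7-5=2$, yet $xy$ has $m-4$ common neighbours in $R$. Indeed $D$ has $m$ triangles, so for $m>56$ even the conclusion $N(K_3,D)\le\binom{8}{3}$ fails. You say criticality must enter but not how, and re-choosing the family to maximise $e(F)$ does not help in this example.

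The missing idea is a one-line degree bound from criticality. For each $v$, the graph $D-v$ contains $a$ disjoint $P_3$'s on a set $S_v$ of $3a$ vertices. Two neighbours of $v$ outside $S_v\cup\{v\}$ would form a further $P_3$ with $v$, so $d_D(v)\le 3a+1$. Hence $|N_R(x)|=d_D(x)-d_F(x)\le 3a+1-d_F(x)=c_F(x)$ for every $x\in S$, and both local bounds follow at once:
\begin{itemize}
\item an edge $xy$ has at most $\min\{|N_R(x)|,|N_R(y)|\}$ common neighbours in $R$;
\item the edges of the matching $D[R]$ inside $N(x)$ are disjoint pairs in $N_R(x)$, so there are at most $\lfloor c_F(x)/2\rfloor$ of them.
\end{itemize}

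The same bound replaces the exchange you propose in the equality case. With $F=K_{3a}$ and $c_F\equiv 2$, each $x\in S$ has $|N_R(x)|\le 2$, while each edge of $F$ has exactly two common neighbours in $R$. So all of $S$ sees the same pair $\{u,w\}$.

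There is also a slip in your equality analysis. Equality in your second local bound means each $x\in S$ sees exactly $\lfloor 2/2\rfloor=1$ edge of $R$, not none. This is what forces $uw\in E(D)$. Connectivity then gives $R=\{u,w\}$, since no other vertex of $R$ can be adjacent to $S\cup\{u,w\}$.
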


\begin{proof}
If $a=0$, then $D$ is $P_3$-free.  Since $D$ is connected, $D$ is either $K_1$ or $K_2$. So the asserted bound and equality statement hold for every $r\ge 3$.  Now we assume that $a\ge 1$.

By criticality, $D$ contains $a$ vertex-disjoint copies of $P_3$. Choose such copies, and let $S$ be their vertex set.  Then $|S|=3a$.  Let $R=V(D)\setminus S$.  Since $D$ is $(a+1)P_3$-free, $D[R]$ is $P_3$-free. So $D[R]$ consists of a matching and isolated vertices.

  Fix $v\in V(D)$.  Since $D$ is $(a+1)P_3$-critical, $D-v$ contains $a$ disjoint copies of $P_3$. Let $S_v$ be their $3a$ vertices.  If $v$ has two distinct neighbors outside $S_v\cup\{v\}$, then these two neighbors together with $v$ form a $P_3$, so $D$ contains $(a+1)P_3$, a contradiction. Hence $d_D(v)\le 3a+1$ for all $v\in V(D)$.  
  
  Let $F=D[S]$.  For $x\in S$ define $c(x)=3a+1-d_F(x)$.  The degree bound gives $|N_R(x)|\le c(x)$.  We classify triangles of $D$ according to how many vertices they have in $R$.  Triangles entirely in $S$ contribute $N(K_3,F)$.  Triangles with one vertex in $R$ and two in $S$ are counted at most by $\sum_{xy\in E(F)}\min\{c(x),c(y)\}$.
Triangles with two vertices in $R$ must use an edge of the matching in $D[R]$ and a common neighbor in $S$; for each $x\in S$, there are at most $\lfloor c(x)/2\rfloor$ such matching edges.  Hence
$$N(K_3,D)\le N(K_3,F)+\sum_{xy\in E(F)}\min\{c(x),c(y)\}+\sum_{x\in S}\left\lfloor\frac{c(x)}2\right\rfloor.$$
Since $F=D[S]$ has $3a$ vertices, by Lemma~\ref{lem:charging},
$N(K_3,D)\le \binom{3a+2}{3}$.
If $r>3a+2$, then by $d_D(v)\le 3a+1$ for all $v\in V(D)$, $N(K_r,D)=0$.
For $3\le r\le 3a+2$,  
since $N(K_3,D)\le \binom{3a+2}{3}$, by Theorem~\ref{thm:lovasz-kk}, $N(K_r,D)\le \binom{3a+2}{r}$.

Now suppose that equality holds for some $r\ge 3$ with $\binom{3a+2}{r}>0$.  Note that $\binom{x}{3}$ is a continuous and monotonically increasing function. If $N(K_3,D)<\binom{3a+2}{3}$, then there is an $x<3a+2$ such that $N(K_3,D)=\binom{x}{3}$.  Since  $N(K_r,D)=\binom{3a+2}{r}>0$, the graph $D$ contains a $K_r$. Hence $N(K_3,D)\ge \binom{r}{3}$. So $x\ge r$.  By Theorem~\ref{thm:lovasz-kk}, $N(K_r,D)\leq \binom{x}{r}$. Since $x<3a+2$, $N(K_r,D)<\binom{3a+2}{r}$, a contradiction.  Hence $N(K_3,D)=\binom{3a+2}{3}$.
Thus all inequalities above in the triangle count are equalities.  By Lemma~\ref{lem:charging}, $F=K_{3a}$.  Hence $d_F(x)=3a-1$ and $c(x)=2$ for all $x\in S$.

Recall that when  counting triangles with one vertex in $R$ and two in $S$, we use inequality $|N_R(x)\cap N_R(y)|\le \min\{c(x),c(y)\}$. Thus for every edge $xy$ of $F$,  $|N_R(x)\cap N_R(y)|=\min\{c(x),c(y)\}=2$.
Since $|N_R(x)|\le 2$ for every $x\in S$, all vertices of $S$ have the same two neighbors in $R$, say $u$ and $v$.  Since triangles with two vertices in $R$ must use an edge in $D[R]$, $uv\in E(D)$.  Since $D[R]$ is a matching plus isolated vertices and $uv$ is one matching edge, any other vertex of $R$ is not adjacent to any vertex in $S\cup\{u,v\}$, a contradiction to the connectivity of $D$.  Therefore $R=\{u,v\}$ and $D=K_{3a+2}$.
\end{proof}

We will use the following two standard binomial identities (see Merris's textbook \cite{Merris2003}).
\begin{lemma}\cite{Merris2003}\label{lem:binomial-identities}
For integers $n,r$ with $r\ge 1$, we have Pascal's identity
$$\binom{n+1}{r}-\binom{n}{r}=\binom{n}{r-1}.$$
Moreover, for integers $m,r$ with $m\ge r-1$, we have the hockey-stick identity
$$\sum_{j=r-1}^{m}\binom{j}{r-1}=\binom{m+1}{r}.$$
\end{lemma}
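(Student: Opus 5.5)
Both identities are standard, and I would prove them directly from the definition of binomial coefficients, using the paper's convention that $\binom ab=0$ when $b<0$ or $b>a$ for nonnegative integer $a$. For general integer upper index I would work with the polynomial definition $\binom{n}{r}=n(n-1)\cdots(n-r+1)/r!$ for $r\ge 1$, which agrees with the convention for $n\ge 0$.

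For Pascal's identity I would compute directly:
$$\binom{n+1}{r}-\binom{n}{r}=\frac{n(n-1)\cdots(n-r+2)}{r!}\bigl((n+1)-(n-r+1)\bigr)=\binom{n}{r-1}.$$
This is a polynomial identity in $n$, so it holds for every integer $n$. Alternatively, for $n\ge 0$ one can argue combinatorially: the $r$-subsets of $\{1,\dots,n+1\}$ that contain $n+1$ are in bijection with the $(r-1)$-subsets of $\{1,\dots,n\}$. The boundary cases ($r=1$, or $r>n+1$) are covered by the convention, since both sides vanish or equal $1$ consistently.

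For the hockey-stick identity I would induct on $m$, starting from $m=r-1$. In the base case both sides equal $1$, since $\binom{r-1}{r-1}=1=\binom{r}{r}$. For the inductive step, add $\binom{m+1}{r-1}$ to both sides of the identity for $m$. Pascal's identity, applied with $n=m+1$, gives $\binom{m+1}{r}+\binom{m+1}{r-1}=\binom{m+2}{r}$, which is the identity for $m+1$. An equivalent route is to telescope Pascal's identity over $j=r-1,\dots,m$, using $\binom{r-1}{r}=0$.

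There is no real obstacle here. The only point needing care is that the conventions for out-of-range indices are consistent at the boundary cases, and the telescoping argument handles those automatically.
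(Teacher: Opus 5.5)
Your proof is correct and complete. The paper gives no argument of its own for this lemma. It simply cites both identities as standard from Merris's textbook, so your argument supplies the routine verification the paper omits. Concretely, you use the falling-factorial computation for Pascal's identity, which holds for every integer $n$ because both sides are polynomials in $n$. For the hockey-stick identity you use induction on $m$, or equivalently telescoping of Pascal's identity.

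One point is worth making explicit: your hockey-stick argument uses $r\ge 1$. The base case needs $\binom{r-1}{r-1}=1$ and the telescoping needs $\binom{r-1}{r}=0$, both with upper index $r-1\ge 0$. The second sentence of the lemma does not restate this hypothesis, but for $r\le 0$ the sum would contain terms with negative upper index, outside the paper's convention. The paper also applies the hockey-stick identity only with $r\ge 3$, in the proof of Lemma~\ref{lem:merge}. So reading $r\ge 1$ as carried over from the first sentence is the right interpretation.

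Similarly, your polynomial definition for negative $n$ goes beyond the paper's convention, which only covers nonnegative upper indices. This is harmless, because every application of Pascal's identity in the paper has a nonnegative upper index.
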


These identities yield the elementary inequality below. The non-strict inequality is a direct consequence of the majorization theorem for convex sequences due to Wu and Debnath \cite[Theorem~1]{WuDebnath2007}.
For completeness, and in order to record the precise strictness condition
needed later, we include a short proof.

\begin{lemma}\label{lem:merge}
Let $x,y\ge 2$ and $r\ge 3$ be integers.  Then $\binom xr+\binom yr\le \binom{x+y-2}{r}$.
If $x,y\ge 3$ and $3\le r\le x+y-2$, then the inequality is strict.
\end{lemma}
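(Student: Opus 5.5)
We argue by induction on $y$, with $x\ge 2$ and $r\ge 3$ held fixed, using the identities of Lemma~\ref{lem:binomial-identities}. For the base case $y=2$, the claim reads $\binom xr+\binom 2r\le\binom xr$. This holds with equality because $\binom 2r=0$ for $r\ge 3$, and this base case also covers the situation $\min\{x,y\}=2$. For the inductive step, we pass from $y$ to $y+1$ and compare increments. By Pascal's identity the left-hand side increases by $\binom{y}{r-1}$, while the right-hand side increases by $\binom{x+y-2}{r-1}$. Since $x\ge 2$, we have $x+y-2\ge y$, and monotonicity of $\binom{\cdot}{r-1}$ in the upper index for nonnegative integers then gives $\binom{y}{r-1}\le\binom{x+y-2}{r-1}$. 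This proves the non-strict inequality.

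Alternatively, one can write everything as a sum via the hockey-stick identity:
$$\binom{x+y-2}{r}-\binom xr=\sum_{j=x}^{x+y-3}\binom{j}{r-1}\ \ge\ \sum_{j=2}^{y-1}\binom{j}{r-1}=\binom yr,$$
where the middle inequality compares the terms pairwise after the shift $j\mapsto j-(x-2)$. This version makes strictness transparent. Either the induction or this display will do; I will present the telescoping form, since it keeps track of equality cleanly.

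For strictness, assume $x,y\ge 3$ and $3\le r\le x+y-2$. The last summand on the left, $j=x+y-3$, is compared with $j'=y-1$ on the right, and we need $\binom{x+y-3}{r-1}>\binom{y-1}{r-1}$. Since $x\ge 3$, we have $x+y-3>y-1$. Moreover $r-1\le x+y-3$, so $\binom{x+y-3}{r-1}>0$. If $\binom{y-1}{r-1}=0$, the strict inequality is immediate. Otherwise $r-1\le y-1$ with $r-1\ge 2$, and $\binom{m}{r-1}$ is strictly increasing in $m$ for $m\ge r-1$; hence $\binom{x+y-3}{r-1}>\binom{y-1}{r-1}$.

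The main obstacle is only bookkeeping. We must check that the hockey-stick ranges are valid, i.e.\ that the lower limits can be taken as low as $2$ even when below $r-1$, because the extra terms vanish by the convention $\binom ab=0$ for $b>a$. We also need the strict monotonicity to apply at the chosen index. The condition $r\le x+y-2$ is exactly what guarantees that the compared term on the large side is positive.
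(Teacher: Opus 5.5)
Your proposal is correct and follows the paper's proof: you telescope $\binom{x+y-2}{r}-\binom{x}{r}$ by Pascal's identity, compare it termwise with the shifted sum $\sum_{j=2}^{y-1}\binom{j}{r-1}=\binom{y}{r}$ from the hockey-stick identity, and get strictness from the last summand, $\binom{x+y-3}{r-1}>\binom{y-1}{r-1}$. Your separate treatment of the case $\binom{y-1}{r-1}=0$ in the strictness step is slightly more careful than the paper's one-line justification.
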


\begin{proof}
If $y=2$, then
$$\binom{x}{r}+\binom{2}{r}=\binom{x}{r}=\binom{x+2-2}{r},$$
because $r\ge 3$.  Hence the assertion holds.

Now assume that $y\ge 3$.  By Pascal's identity in Lemma~\ref{lem:binomial-identities}, we get
$$\begin{aligned}
\binom{x+y-2}{r}-\binom{x}{r}
&=\sum_{i=0}^{y-3}
\left(
\binom{x+i+1}{r}-\binom{x+i}{r}
\right)=\sum_{i=0}^{y-3}\binom{x+i}{r-1}.
\end{aligned}$$
Since $x\ge 2$, we have $x+i\ge i+2$ for every $0\le i\le y-3$.  Thus
$$\binom{x+i}{r-1}\ge \binom{i+2}{r-1}.$$
So
$$\sum_{i=0}^{y-3}\binom{x+i}{r-1}\ge
\sum_{i=0}^{y-3}\binom{i+2}{r-1}.$$
Let $j=i+2$.  Then
$$\sum_{i=0}^{y-3}\binom{i+2}{r-1}=
\sum_{j=2}^{y-1}\binom{j}{r-1}.$$
Since $r\ge 3$, all terms with $j<r-1$ are zero.  Thus, by the hockey-stick
identity in Lemma~\ref{lem:binomial-identities},
$$\sum_{j=2}^{y-1}\binom{j}{r-1}=
\sum_{j=r-1}^{y-1}\binom{j}{r-1}=
\binom{y}{r}.$$
Combining the above inequalities gives
$$\binom{x+y-2}{r}-\binom{x}{r}\ge \binom{y}{r}.$$
Equivalently,
$$\binom{x}{r}+\binom{y}{r}\le \binom{x+y-2}{r}.$$

If $x,y\ge 3$ and $3\le r\le x+y-2$, then the last summand comparison is strict:
$x+y-3\ge r-1$, while $x+y-3>y-1$.  Hence the whole inequality is strict.
\end{proof}

We shall use the following standard fact on convex sequences.
The estimate
$x_i\leq \frac{d-i}{d}x_0+\frac{i}{d}x_d$
in the first assertion is a special case of
\cite[Theorem~3.1]{Niezgoda2017}.
For completeness, we include a short proof, which also gives
the equality statement.

\begin{lemma}\label{lem:convex-seq}
Let $x_0,\ldots,x_d$ be a real sequence whose first differences
$\Delta_i=x_{i+1}-x_i$ are nondecreasing. Then
$x_i\le \max\{x_0,x_d\}$ for $0\le i\le d$. If the sequence is not
constant, then no internal term $x_i$, $0<i<d$, attains
$\max\{x_0,x_d\}$.
\end{lemma}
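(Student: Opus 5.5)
The plan is a direct telescoping argument that uses only the monotonicity of the differences $\Delta_i$. Fix $0\le i\le d$ and write
$$x_i-x_0=\sum_{j=0}^{i-1}\Delta_j,\qquad x_d-x_i=\sum_{j=i}^{d-1}\Delta_j .$$
Since the differences are nondecreasing, every $\Delta_j$ with $j<i$ is at most $\Delta_{i-1}$, and every $\Delta_j$ with $j\ge i$ is at least $\Delta_{i-1}$ (and at least $\Delta_i$). For $0<i<d$ this gives
$$\frac{x_i-x_0}{i}\le \Delta_{i-1}\le \Delta_i\le \frac{x_d-x_i}{d-i}.$$
Cross-multiplying, $(d-i)(x_i-x_0)\le i(x_d-x_i)$, which rearranges to
$$x_i\le \frac{d-i}{d}x_0+\frac{i}{d}x_d\le \max\{x_0,x_d\}.$$
The endpoint cases $i=0$ and $i=d$ are trivial.

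For the strictness assertion, suppose some internal $x_i$ (with $0<i<d$) equals $\max\{x_0,x_d\}$. A convex combination with both weights positive equals the maximum of the two values only if $x_0=x_d=x_i$, and moreover all the displayed inequalities must be equalities. From $(x_i-x_0)/i=\Delta_{i-1}$, each of $\Delta_0,\dots,\Delta_{i-1}$ equals $\Delta_{i-1}$, because they are all at most $\Delta_{i-1}$ and their average is $\Delta_{i-1}$. Likewise all of $\Delta_i,\dots,\Delta_{d-1}$ equal $\Delta_i$, and equality in the middle forces $\Delta_{i-1}=\Delta_i$. Hence all differences equal a common value $c$, and $x_i=x_0$ forces $ic=0$, so $c=0$ and the sequence is constant.

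The only delicate point is this equality bookkeeping: I need to state cleanly that an average of terms bounded by a value equals that value only when all the terms equal it. No earlier lemma of the paper is needed.
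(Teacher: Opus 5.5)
Your proof is correct and follows essentially the same route as the paper. You telescope $x_i-x_0$ and $x_d-x_i$, compare the averages of the first $i$ and the last $d-i$ differences by monotonicity (you simply insert $\Delta_{i-1}\le\Delta_i$ between them), and deduce $x_i\le\frac{d-i}{d}x_0+\frac{i}{d}x_d\le\max\{x_0,x_d\}$. Your equality argument (all differences equal a common $c$, then $ic=0$) is slightly more explicit than the paper's, which goes from the two vanishing partial sums directly to all differences being zero.
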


\begin{proof}
For $0<i<d$, since
$\Delta_0\le\Delta_1\le\cdots\le\Delta_{d-1}$, we have
$$\frac{x_i-x_0}{i}
=\frac{1}{i}\sum_{j=0}^{i-1}\Delta_j
\le
\frac{1}{d-i}\sum_{j=i}^{d-1}\Delta_j
=\frac{x_d-x_i}{d-i}.$$
Hence
$$x_i\le \frac{d-i}{d}x_0+\frac{i}{d}x_d
\le \max\{x_0,x_d\}.$$
If an internal term $x_i$ attains $\max\{x_0,x_d\}$, then by the
preceding inequalities, we have $x_0=x_i=x_d$. Hence
$\sum_{j=0}^{i-1}\Delta_j=x_i-x_0=0$
and $\sum_{j=i}^{d-1}\Delta_j=x_d-x_i=0.$ 
Since the differences are
nondecreasing,  all the differences are zero. So the
sequence is constant.
\end{proof}

\section{Proof of the main theorem}


Since the  graphs $H(n,k,0)$ and $H(n,k,k-1)$ are both
$kP_3$-free, we have
$$\ex(n,K_s,kP_3)\ge \max\{h_s(n,k,0),h_s(n,k,k-1)\}.$$
Now we prove that $\ex(n,K_s,kP_3)\le \max\{h_s(n,k,0),h_s(n,k,k-1)\}$. Choose an $n$-vertex $kP_3$-free graph $G$ maximizing $N(K_s,G)$; subject to this, choose $G$ with maximum number of edges. Then $G$ is $kP_3$-saturated.

Let $W=\{v\in V(G):d_G(v)=n-1\}$, put $q=|W|$, $H=G-W$, and $\ell=k-q$. By Lemma~\ref{lem:residual-critical}, we have $0\le q\le k-1$, $G=K_q+H$ and $H$ is $\ell P_3$-critical.

Let $D_1,D_2,\ldots,D_c$ be the connected components of $H$. For $1\leq i\leq c$, let $a_i$ be the maximum number of vertex-disjoint copies of $P_3$ in $D_i$.  Since $H$ is $\ell P_3$-critical, it contains $(\ell-1)P_3$ but no $\ell P_3$.  So $\sum_i a_i=\ell-1$.

Moreover, each $D_i$ is $(a_i+1)P_3$-critical.  Indeed, $D_i$ is $(a_i+1)P_3$-free by the definition of $a_i$.  For any $v\in V(D_i)$, the graph $H-v$ contains $(\ell-1)P_3$.  The components $D_j$ with $j\ne i$ contain altogether at most $\sum_{j\ne i}a_j$ disjoint copies of $P_3$, so $D_i-v$ must contain $a_iP_3$.  Thus Lemma~\ref{lem:local} applies to every component.
If $a_i=0$, then $D_i$ is connected and $P_3$-free. So $D_i$ is $K_1$ or $K_2$.

For $r\ge 3$, by Lemma~\ref{lem:local} and Lemma~\ref{lem:merge}, we have
\begin{equation}\label{eq:residual-clique-bound}
N(K_r,H)\le \sum_i\binom{3a_i+2}{r}
   \le \binom{3\sum_i a_i+2}{r}
   =\binom{3\ell-1}{r}.
\end{equation}


  Put $m=|H|=n-q$. Then $m-3\ell=(n-q)-3(k-q)=n-3k+2q\ge 0$.
Since $G=K_q+H$, $N(K_s,G)=\sum_{j=0}^{s}\binom qjN(K_{s-j},H)$.
Here $N(K_0,H)=1$, $N(K_1,H)=m$, and $N(K_2,H)=e(H)$.  By \eqref{eq:residual-clique-bound} and $s\ge 3$, we have
\begin{equation}\label{eq:1}
N(K_s,G)\le \sum_{j=0}^{s-3}\binom qj\binom{3\ell-1}{s-j}+\binom q{s-2}e(H)+\binom q{s-1}m+\binom qs.
\end{equation}
 By Lemma~\ref{lem:residual-critical}, $H$ is $\ell P_3$-free. By Theorem~\ref{thm:YZ},
$e(H)\le \max\{h_2(m,\ell,0),h_2(m,\ell,\ell-1)\}$.  We consider the corresponding two cases.

\subsection[First edge-bound case]{The case  $h_2(m,\ell,\ell-1)\le h_2(m,\ell,0)$}\label{subsec:case1}


By $h_2(m,\ell,\ell-1)\le h_2(m,\ell,0)$,  we have $e(H)\le h_2(m,\ell,0)$. Since
$$h_2(m,\ell,0)=\binom{3\ell-1}{2}
+\left\lfloor\frac{m-3\ell+1}{2}\right\rfloor,$$
by \eqref{eq:1}, we have
$$\begin{aligned}
N(K_s,G)
&\le
\sum_{j=0}^{s-3}\binom qj\binom{3\ell-1}{s-j}  
+\binom q{s-2}
\left[
\binom{3\ell-1}{2}
+
\left\lfloor\frac{m-3\ell+1}{2}\right\rfloor
\right]  \\
&\quad
+\binom q{s-1}m
+\binom qs .
\end{aligned}$$
Note that
$\binom q{s-1}m=\binom q{s-1}\binom{3\ell-1}{1}+(m-3\ell+1)\binom q{s-1}$ and $\binom qs=\binom qs\binom{3\ell-1}{0}$.
So
$$\begin{aligned}
&\sum_{j=0}^{s-3}\binom qj\binom{3\ell-1}{s-j}
+\binom q{s-2}\binom{3\ell-1}{2}
+\binom q{s-1}\binom{3\ell-1}{1}
+\binom qs\binom{3\ell-1}{0}  \\
&\qquad =
\sum_{j=0}^{s}\binom qj\binom{3\ell-1}{s-j}
=
\binom{q+3\ell-1}{s},
\end{aligned}$$
where the last equality is Vandermonde's identity.  Consequently,
$$N(K_s,G)\le
\binom{q+3\ell-1}{s}+(m-3\ell+1)\binom q{s-1}+
\left\lfloor\frac{m-3\ell+1}{2}\right\rfloor
\binom q{s-2}.$$
Since $\ell=k-q$ and $m=n-q$, we have $q+3\ell-1=3k-1-2q$
and $m-3\ell+1=n-3k+1+2q$.
By the definition of $H(n,k,q)$, the last expression is exactly
$h_s(n,k,q)=N(K_s,H(n,k,q))$.  Hence
$N(K_s,G)\le h_s(n,k,q)$, where
$$h_s(n,k,q)=\binom{3k-1-2q}{s}+
(n-3k+1+2q)\binom q{s-1}+
\left\lfloor\frac{n-3k+1+2q}{2}\right\rfloor
\binom q{s-2}.$$

\begin{claim}\label{clm:H-count}
Let $n,k,s$ be integers with $k\ge 1$, $n\ge 3k$, and $s\ge 3$.  For every integer $q$ with $0\le q\le k-1$,
$h_s(n,k,q)\le \max\{h_s(n,k,0),h_s(n,k,k-1)\}$.
Moreover, if $3\le s\le 3k-1$, then every internal $q$, $0<q<k-1$, satisfies the strict inequality.
\end{claim}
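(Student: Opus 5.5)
The plan is to treat $q\mapsto h_s(n,k,q)$, $0\le q\le k-1$, as a sequence and show that it is discretely convex: its first differences are nondecreasing. Lemma~\ref{lem:convex-seq} with $d=k-1$ then gives both assertions. The non-strict bound comes directly from that lemma. The strict statement follows once I check that, for $3\le s\le 3k-1$, the sequence is not constant; note that internal values of $q$ exist only when $k\ge 3$.

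The first step is to remove the floor. Put $N=n-3k+1\ge 1$ and $\Phi=\lfloor N/2\rfloor$. Since $N+2q$ has the same parity as $N$, we have $\lfloor (N+2q)/2\rfloor=\Phi+q$. Hence
$$h_s(n,k,q)=f(q)+g(q)+p(q),$$
where
$$f(q)=\binom{3k-1-2q}{s},\qquad g(q)=(N+2q)\binom{q}{s-1},\qquad p(q)=(\Phi+q)\binom{q}{s-2}.$$
It suffices to show that each of $f,g,p$ has nondecreasing first differences, since sums of such sequences again have this property.

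For $f$, apply Pascal's identity (Lemma~\ref{lem:binomial-identities}) twice, writing $c=3k-1$:
$$f(q+1)-f(q)=-\binom{c-2q-1}{s-1}-\binom{c-2q-2}{s-1}.$$
As $q$ increases, the upper arguments decrease. Because $\binom{a}{s-1}$ is nondecreasing in $a$ (with the paper's convention for small $a$), the subtracted quantity decreases, so the differences increase.

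For $g$, a direct computation gives
$$g(q+1)-g(q)=(N+2q)\binom{q}{s-2}+2\binom{q+1}{s-1}.$$
This is a sum of products of nonnegative nondecreasing functions of $q$, so it is nondecreasing. The same computation for $p$ gives
$$p(q+1)-p(q)=(\Phi+q)\binom{q}{s-3}+\binom{q+1}{s-2},$$
which is nondecreasing for the same reason. Lemma~\ref{lem:convex-seq} now yields the first assertion.

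The main obstacle is the strictness, which requires showing the sequence is not constant. Since all differences are nondecreasing, it suffices to show that one consecutive pair of differences of $f$ is strictly increasing. I would use the pair at $q=0,1$, assuming $k\ge 3$. Its increment is
$$\binom{3k-2}{s-1}+\binom{3k-3}{s-1}-\binom{3k-4}{s-1}-\binom{3k-5}{s-1}.$$
For $2\le s-1\le 3k-2$ this is positive, because $\binom{a}{b}>\binom{a-2}{b}$ whenever $a\ge b\ge 1$, and here $a=3k-2\ge s-1$. Hence the sequence is not constant, and Lemma~\ref{lem:convex-seq} rules out any internal maximizer. This is precisely where the hypothesis $s\le 3k-1$ enters; some care is needed at the boundary values of the binomial convention.
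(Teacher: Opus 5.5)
Your proof is correct and follows essentially the same route as the paper. Both arguments establish discrete convexity of $q\mapsto h_s(n,k,q)$: the paper writes the second difference explicitly as a sum of nonnegative binomial terms, while you check that each of the three summands $f,g,p$ has nondecreasing first differences. Both then invoke Lemma~\ref{lem:convex-seq}, with strictness coming from a positive second difference at $q=0$ that is positive precisely because $s\le 3k-1$.
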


\begin{proof}
Let $m_q=3k-1-2q$, $a_q=n-3k+1+2q$ and
$f_q=\left\lfloor a_q/2\right\rfloor$. 
For $0\le q\le k-2$, by Pascal's identity in Lemma~\ref{lem:binomial-identities}, we have
  $$\begin{aligned}
\Delta_q&=h_s(n,k,q+1)-h_s(n,k,q) \\
&=-\binom{m_q-1}{s-1}-\binom{m_q-2}{s-1}
   +2\binom q{s-1}+(a_q+3)\binom q{s-2}+(f_q+1)\binom q{s-3}.
\end{aligned}$$
The complete  calculation, together with
the calculation of the second difference below, is given in
Appendix~\ref{app:h-differences}.
Applying Pascal's identity once more  gives, for
$0\le q\le k-3$,
$$\begin{aligned}
\Delta_{q+1}-\Delta_q
&=\binom{m_q-2}{s-2}+2\binom{m_q-3}{s-2}+\binom{m_q-4}{s-2} \\
&\quad +4\binom q{s-2}+(a_q+6)\binom q{s-3}+(f_q+2)\binom q{s-4}\ge 0.
\end{aligned}$$
Thus the sequence $h_s(n,k,0),\ldots,h_s(n,k,k-1)$ is discrete convex. By Lemma~\ref{lem:convex-seq}, we have
$h_s(n,k,q)\le \max\{h_s(n,k,0),h_s(n,k,k-1)\}$.
If $k\le 2$, then there is no internal $q$.  If $k\ge 3$ and $3\le s\le 3k-1$, then
$\Delta_1-\Delta_0\ge \binom{3k-3}{s-2}>0$. So the sequence is not constant.  Lemma~\ref{lem:convex-seq} gives the strict internal inequality.
\end{proof}

Hence by Claim~\ref{clm:H-count},
$N(K_s,G)\le h_s(n,k,q)\le  \max\{h_s(n,k,0),h_s(n,k,k-1)\}$.

\subsection[Second edge-bound case]{The case  $h_2(m,\ell,0)\le h_2(m,\ell,\ell-1)$}\label{subsec:case2}

By $h_2(m,\ell,0)\le h_2(m,\ell,\ell-1)$, we have $e(H)\le h_2(m,\ell,\ell-1)$.  Since $\ell=k-q$ and
$m=n-q$, we have
$$h_2(m,\ell,\ell-1)=
\binom{k-q-1}{2}+(n-k+1)(k-q-1)
+\left\lfloor\frac{n-k+1}{2}\right\rfloor.$$
Let
$$
\begin{aligned}
g_s(n,k,q)=
&\sum_{i=0}^{s-3}\binom q{i}\binom{3k-3q-1}{s-i} \\
&+\binom q{s-2}
\left[
\binom{k-q-1}{2}+(n-k+1)(k-q-1)
+\left\lfloor\frac{n-k+1}{2}\right\rfloor
\right] \\
&+\binom q{s-1}(n-q)+\binom qs .
\end{aligned}
$$
Then \eqref{eq:1} becomes $N(K_s,G)\le g_s(n,k,q)$.  

For a polynomial $f(x)$, let $[x^s]f(x)$ denote the coefficient of $x^s$ in
$f(x)$.  Recall the binomial convention:
$\binom ab=0$ whenever $a$ is a nonnegative integer and either
$b<0$ or $b>a$.

\begin{claim}\label{clm:fan}
Let $n,k,s$ be integers with $k\ge 1$, $n\ge k-1$, and $s\ge 3$.  Let $$\gamma=\max\{g_s(n,k,0),g_s(n,k,k-1)\}.$$ Then for every
integer $q$ with $0\le q\le k-1$,
$g_s(n,k,q)
\le
\gamma$.
Moreover, if $\gamma>0$, then the inequality is strict for
every $0<q<k-1$.
\end{claim}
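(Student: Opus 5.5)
The plan is to mirror the proof of Claim~\ref{clm:H-count}. We show that the sequence $g_s(n,k,0),\ldots,g_s(n,k,k-1)$ has nondecreasing first differences, apply Lemma~\ref{lem:convex-seq}, and then rule out the constant case when $\gamma>0$.

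\emph{Step 1: rewrite $g_s$.} Write $\ell=k-q$ and $E_q=\binom{k-q-1}{2}+(n-k+1)(k-q-1)+\lfloor (n-k+1)/2\rfloor$. Then
$$g_s(n,k,q)=[x^s]\,(1+x)^q\Bigl((1+x)^{3\ell-1}-\tbinom{3\ell-1}{2}x^2-(3\ell-1)x+E_qx^2+(n-q)x\Bigr).$$
The first product is $(1+x)^{3k-2q-1}$, so by Vandermonde
$$g_s(n,k,q)=\binom{3k-2q-1}{s}+\binom q{s-2}\Bigl(E_q-\tbinom{3k-3q-1}{2}\Bigr)+\binom q{s-1}\bigl(n-q-3k+3q+1\bigr).$$
This splits $g_s$ into a ``clique part'' and two correction terms. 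The clique part is convex in $q$ and decreasing, exactly as in Claim~\ref{clm:H-count}. The correction terms are products of $\binom q{s-2}$ or $\binom q{s-1}$ with explicit polynomials in $q$ of degree at most $2$. Since $\lfloor (n-k+1)/2\rfloor$ does not depend on $q$, the floor causes no trouble here, unlike $f_q$ before.

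\emph{Step 2: first and second differences.} Using Pascal's identity (Lemma~\ref{lem:binomial-identities}) and the discrete product rule $\Delta(uv)_q=u_{q+1}\Delta v_q+v_q\Delta u_q$, we compute $\Delta_q=g_s(n,k,q+1)-g_s(n,k,q)$. Applying the same rules again gives $\Delta_{q+1}-\Delta_q$. We then group this expression into terms of the forms
$$\binom{3k-2q-c}{s-2},\qquad \binom q{s-2},\qquad \binom q{s-3},\qquad \binom q{s-4},$$
with coefficients that are polynomials in $n,k,q$. Each coefficient should be shown nonnegative using $n\ge k-1$ and $0\le q\le k-3$.

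\emph{Main obstacle.} The correction coefficient $E_q-\binom{3k-3q-1}{2}$ can be negative, since $\binom{3k-3q-1}{2}$ is quadratic with leading coefficient $9/2$ while $E_q$ has leading coefficient $1/2$. Its product with $\binom q{s-2}$ may therefore contribute negative second differences. The first thing I would try is to absorb these negative pieces into the positive second difference of the clique part. The tool is the inequality $\binom{3k-2q-1-j}{s-2}\ge\binom{q}{s-2}$ and its analogues, valid because $3k-2q-1-j\ge q$ for $q\le k-1$ and small $j$, together with the bound $\binom{3\ell-1}{2}\le\binom{3\ell-1}{s-2}\cdot(\text{const})$ when $s\ge 4$. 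If that bookkeeping fails, the fallback is to bypass full convexity. I would instead prove the chord bound $g_s(q)\le\frac{k-1-q}{k-1}g_s(0)+\frac{q}{k-1}g_s(k-1)$ directly. This would be done by splitting into the range $q<s-2$, where only the convex clique part $\binom{3k-2q-1}{s}$ survives, and the range $q\ge s-2$, where the terms are explicit low-degree polynomials in $q$.

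\emph{Step 3: strictness.} Once convexity holds, Lemma~\ref{lem:convex-seq} gives $g_s(n,k,q)\le\gamma$, and strictness for internal $q$ follows unless the sequence is constant. If $\gamma>0$ and $k\ge 3$, I will exhibit a strictly positive term in some second difference. One candidate is $\binom{3k-3}{s-2}$ when $s\le 3k-1$. Another is the term coming from $\binom q{s-2}(n-k+1)$ when $q$ is large. Otherwise every term vanishes, which forces all $g_s(n,k,q)=0$ and contradicts $\gamma>0$.
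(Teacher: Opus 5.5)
\textbf{The convexity in Step 2 is false, so the plan breaks there.} Your whole plan rests on $q\mapsto g_s(n,k,q)$ having nondecreasing first differences, but for large $n$ the sequence is concave. Take $k=3$, $s=3$, $n=40$, so $L=n-k+1=38$ and $\lfloor L/2\rfloor=19$. From the definition:
\begin{itemize}
\item $g_3(40,3,0)=\binom 83=56$;
\item $g_3(40,3,1)=\binom 53+(38+19)=67$;
\item $g_3(40,3,2)=2\cdot 19+\binom22\cdot 38=76$.
\end{itemize}
The first differences are $11,9$, which decrease. Also $67>\frac12(56+76)$, so your chord-bound fallback fails too; the claim itself still holds here, since $67<76$.

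The failure is structural, not a matter of bookkeeping. Write $L=2t+\varepsilon$. Then $g_s(n,k,q)=U(q)+t\,b(q)$, where $U$ depends on $n$ only through $\varepsilon$ and $b(q)=2\binom q{s-1}+(2k-2q-1)\binom q{s-2}$. For $s=3$ this gives $b(q)=2(k-1)q-q^2$, whose second difference is $-2$. So the second difference of $g_3$ is a quantity bounded independently of $n$, minus $2t$; in the example it is $36-L$.

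No comparison such as $\binom{3k-2q-1-j}{s-2}\ge\binom q{s-2}$ between $n$-free quantities can absorb a negative term that grows linearly in $n$. The obstacle you flagged, the $q^2$-coefficients $1/2$ versus $9/2$, is not the real one. The real one is the term $(n-k+1)(k-q-1)\binom q{s-2}$ inside $E_q$, combined with $\binom q{s-1}(n-3k+2q+1)$. Step 3 inherits the same problem, because it derives strictness from non-constancy of a convex sequence.

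\textbf{What the proof needs instead is quasi-convexity.} It suffices that the first differences change sign at most once, so the sequence is valley-shaped and its maximum sits at an endpoint. The paper gets this from the affine dependence on $t$. Put $d=k-1$, $h=d-q$ and $\Delta_h=g_s(n,k,d-h-1)-g_s(n,k,d-h)$. The paper shows:
\begin{itemize}
\item $\Delta_h=A_h-t\,r_h$, where $A_h\ge 0$ (proved through the generating function $(1+x)^{d-h}R_h(x)$) and $r_h=(2h+1)\binom{d-h-1}{s-3}\ge 0$;
\item the threshold $T_h=A_h/r_h$ is nondecreasing in $h$.
\end{itemize}
Hence once some $\Delta_h\ge 0$, every later $\Delta_{h'}\ge 0$. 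In the example, all differences are negative in the $h$-direction, so the valley sits at $q=0$.

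Strictness also needs its own argument rather than non-constancy. The paper proves $g_s(n,k,0)>g_s(n,k,1)$ whenever $g_s(n,k,0)\ge g_s(n,k,k-1)$ and $g_s(n,k,0)>0$, with an explicit computation for $s=3$. It then combines this with the one-sign-change pattern to exclude interior ties in both cases.
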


\begin{proof} If $q=0$ or $q=k-1$, there is nothing to show. Assume that $0<q<k-1$.
Let $d=k-1$, $h=k-1-q$ and $L=n-k+1$. Then $q=d-h$ and $$g_s(n,k,q)=\sum_{i=0}^{s-3}\binom{d-h}{i}\binom{3h+2}{s-i}
 +\binom{d-h}{s-2}\left[\binom h2+Lh+\left\lfloor\frac L2\right\rfloor\right]
 +\binom{d-h}{s-1}(L+h)+\binom{d-h}{s}.$$

Write $L=2t+\varepsilon$, where $\varepsilon\in\{0,1\}$, and define
\begin{equation}\label{eq:2}
\begin{aligned}
U_h={}&\sum_{i=0}^{s-3}\binom{d-h}{i}\binom{3h+2}{s-i}
+\binom{d-h}{s-2}\left[\binom h2+\varepsilon h\right] +\binom{d-h}{s-1}(h+\varepsilon)+\binom{d-h}{s},\\
b_h={}&2\binom{d-h}{s-1}+(2h+1)\binom{d-h}{s-2}.
\end{aligned}
\end{equation}
Then $g_s(n,k,d-h)=U_h+t b_h$.  

For $0\le h\le d-1$, let
$\Delta_h=g_s(n,k,d-h-1)-g_s(n,k,d-h)$.
Let $A_h=U_{h+1}-U_h$ and $r_h=b_h-b_{h+1}$.  Then
\begin{equation}\label{eq:3}
\Delta_h=A_h-t r_h.
\end{equation}
We first compute $r_h$.  From \eqref{eq:2} and Pascal's identity in Lemma~\ref{lem:binomial-identities}, we get
\begin{equation}\label{eq:4}
\begin{aligned}
r_h
&=2\left[\binom{d-h}{s-1}-\binom{d-h-1}{s-1}\right]
 +(2h+1)\binom{d-h}{s-2}-(2h+3)\binom{d-h-1}{s-2}  \\
&=2\binom{d-h-1}{s-2}
 +(2h+1)\left[\binom{d-h-1}{s-2}+\binom{d-h-1}{s-3}\right]
 -(2h+3)\binom{d-h-1}{s-2}  \\
&=(2h+1)\binom{d-h-1}{s-3}\geq 0.
\end{aligned}
\end{equation}

Define
$$R_h(x)=(1+x)^{3h+2}+(\varepsilon-2h-2)x+
\left[\binom h2+\varepsilon h-\binom{3h+2}{2}\right]x^2.
$$
We now compute $[x^r]R_{h}(x)$. For $r\ge 3$, this is exactly $\binom{3h+2}{r}$.  For $r=1$, the coefficient is
$(3h+2)+(\varepsilon-2h-2)=h+\varepsilon$.
For $r=2$, the coefficient is
$$\binom{3h+2}{2}+\left[\binom h2+\varepsilon h-\binom{3h+2}{2}\right]=\binom h2+\varepsilon h.$$
Hence the coefficients of $R_h(x)$ are
$$[x^r]R_h(x)=
\begin{cases}
1, & r=0,\\
h+\varepsilon, & r=1,\\
\binom h2+\varepsilon h, & r=2,\\
\binom{3h+2}{r}, & r\ge 3.
\end{cases}$$
So the coefficient of $x^s$ in $(1+x)^{d-h}R_h(x)$ is
$[x^s](1+x)^{d-h}R_h(x)=\sum_i \binom{d-h}{i}[x^{s-i}]R_h(x)$.
By the coefficient description of $R_h(x)$ above, this coefficient is exactly
$$\sum_{i=0}^{s-3}\binom{d-h}{i}\binom{3h+2}{s-i}
+\binom{d-h}{s-2}\left[\binom h2+\varepsilon h\right]
+\binom{d-h}{s-1}(h+\varepsilon)
+\binom{d-h}{s},$$
which is $U_h$.  Thus $U_h=[x^s](1+x)^{d-h}R_h(x)$.
So
\begin{equation}\label{eq:6}
\begin{aligned}
A_h
&=U_{h+1}-U_h=[x^s](1+x)^{d-h-1}R_{h+1}(x)
  -[x^s](1+x)^{d-h}R_h(x)  \\
&~~~~~~~~~~~~~~~~~~=[x^s](1+x)^{d-h-1}\left(R_{h+1}(x)-(1+x)R_h(x)\right).
\end{aligned}
\end{equation}

Let $c_h=\binom h2+\varepsilon h-\binom{3h+2}{2}
    =-4h^2+(\varepsilon-5)h-1$.
Then $R_h(x)=(1+x)^{3h+2}+(\varepsilon-2h-2)x+c_hx^2$.
By a direct calculation,
\begin{equation}\label{eq:7}
\begin{aligned}
R_{h+1}(x)-(1+x)R_h(x)
&=(1+x)^{3h+5}-(1+x)^{3h+3}-2x-(6h+7)x^2-c_hx^3.
\end{aligned}
\end{equation}
Here the coefficient $-2$ comes from
$(\varepsilon-2h-4)-(\varepsilon-2h-2)$, and the coefficient $-(6h+7)$ is
$(c_{h+1}-c_h)-(\varepsilon-2h-2)=-(6h+7)$.
By a simple calculation, we have
\begin{equation}\label{eq:8}
(1+x)^{3h+5}-(1+x)^{3h+3}
=x\left((1+x)^{3h+4}+(1+x)^{3h+3}\right).
\end{equation}
The coefficient of $x$ in the right side of \eqref{eq:8} is $2$, which cancels the
term $-2x$ in \eqref{eq:7}.  The coefficient of $x^2$ in the right side of \eqref{eq:8} is
$(3h+4)+(3h+3)=6h+7$, which cancels the term $-(6h+7)x^2$ in \eqref{eq:7}.    The coefficient of $x^3$ is
$\binom{3h+4}{2}+\binom{3h+3}{2}-c_h=13h^2+(23-\varepsilon)h+10$.
Let
$Q_h=13h^2+(23-\varepsilon)h+10.$
Then
\begin{equation}\label{eq:10}
R_{h+1}(x)-(1+x)R_h(x)
=Q_hx^3+\sum_{j\ge 3}\left[\binom{3h+4}{j}+\binom{3h+3}{j}\right]x^{j+1}.
\end{equation}
Here the complete calculation is given in Appendix~\ref{app:R-difference}.
Combining \eqref{eq:6} and \eqref{eq:10}, we get
\begin{equation}\label{eq:11}
A_h=Q_h\binom{d-h-1}{s-3}
 +\sum_{j=3}^{s-1}\left[\binom{3h+4}{j}+\binom{3h+3}{j}\right]
   \binom{d-h-1}{s-1-j}.
\end{equation}
Every term in \eqref{eq:11} is nonnegative, so $A_h\ge 0$.

If $r_h=0$, then by \eqref{eq:3}, $\Delta_h=A_h\ge 0$.  Moreover, by
\eqref{eq:4}, $r_h=0$ means $\binom{d-h-1}{s-3}=0$, and the same is true for every
later index $h'>h$; hence every later difference with $r_{h'}=0$ is also
nonnegative after the same argument.  Thus only the case $r_h>0$ needs a
threshold calculation.

Assume that $r_h>0$ and $r_{h+1}>0$, and define $T_h=A_h/r_h$.
Since $r_h>0$, it follows from \eqref{eq:3} that
\begin{equation}\label{eq:12}
\Delta_h\ge 0 ~\text{if and only if} ~ t\le T_h.
\end{equation}
By \eqref{eq:4} and \eqref{eq:11}, 
\begin{equation}\label{eq:13}
T_h=\frac{Q_h}{2h+1}+
\sum_{j=3}^{s-1}\frac{\binom{3h+4}{j}+\binom{3h+3}{j}}{2h+1}
\cdot
\frac{\binom{d-h-1}{s-1-j}}{\binom{d-h-1}{s-3}}.
\end{equation}
We claim that $T_h$ is nondecreasing on the range where $r_h>0$.

By a direct calculation,
\begin{equation}\label{eq:14}
\frac{Q_{h+1}}{2h+3}-\frac{Q_h}{2h+1}
=\frac{26h^2+52h+16-\varepsilon}{(2h+1)(2h+3)}\ge 0.
\end{equation}
Fix $j\ge 3$.  Now we prove that the two functions
$h\mapsto \binom{3h+3}{j}/2h+1$ and
$h\mapsto \binom{3h+4}{j}/2h+1$
are nondecreasing.    Let $z$ stand for either
$3h+3$ or $3h+4$.  It is enough to prove
\begin{equation}\label{eq:15}
(2h+1)\binom{z+3}{j}\ge (2h+3)\binom zj.
\end{equation}
If $z<j$, then $\binom zj=0$. So \eqref{eq:15} holds.  If $z\ge j$,
then by $j\ge 3$,
\begin{equation}\label{eq:16}
\frac{\binom{z+3}{j}}{\binom zj}
=\frac{(z+1)(z+2)(z+3)}{(z+1-j)(z+2-j)(z+3-j)}
\ge \frac{(z+1)(z+2)(z+3)}{(z-2)(z-1)z}.
\end{equation}
After substituting $z=3h+3$ into the inequality obtained from \eqref{eq:16}, the
difference between the two sides is $3(36h^3+117h^2+117h+34)>0$.
After substituting $z=3h+4$, the corresponding difference is
$3(36h^3+135h^2+153h+46)>0$.  Both expansions are written out in
Appendix~\ref{app:threshold-algebra}.
Thus the two functions
$h\mapsto \binom{3h+3}{j}/2h+1$ and
$h\mapsto \binom{3h+4}{j}/2h+1$
are nondecreasing.

Fix $j$ with $3\le j\le s-1$, and let $h$ be such that
$r_h>0$ and $r_{h+1}>0$. 
\begin{equation}\label{eq:17}
\frac{\binom{d-h-2}{s-1-j}/\binom{d-h-2}{s-3}}{\binom {d-h-1} {s-1-j}/\binom {d-h-1} {s-3}}
=\frac{d-h-s+j}{d-h-s+2}\ge 1.
\end{equation}
  According to \eqref{eq:17},
$h\mapsto \frac{\binom{d-h-1}{s-1-j}}{\binom{d-h-1}{s-3}}$
is nondecreasing.  By \eqref{eq:14}, \eqref{eq:15}-\eqref{eq:17}, every summand in
\eqref{eq:13} is nondecreasing.  Hence $T_h$ is nondecreasing on the range where $r_h>0$.

 Suppose that $\Delta_h\ge 0$.  If $r_h>0$,
then $t\le T_h$ by \eqref{eq:12}.  For every $h'>h$ with $r_{h'}>0$, since  $T_h$ is nondecreasing, $T_{h'}\ge T_{h}$. So $T_{h'}\ge t$. By \eqref{eq:12}, $\Delta_{h'}\ge 0$.  Recall that if a later index has
$r_{h'}=0$, then $\Delta_{h'}\ge 0$.  Thus
\begin{equation}\label{eq:18}
\text{if}~ \Delta_h\ge 0, ~\text{then}~
\Delta_{h'}\ge 0~\text{for every }h'>h.
\end{equation}
 If some difference is nonnegative, let $p$ be the first such index;
otherwise put $p=d$.  Then $\Delta_i<0$ for $0\le i<p$, and by \eqref{eq:18}
$\Delta_i\ge 0$ for $p\le i\le d-1$.
Therefore the sequence
$g_s(n,k,d),g_s(n,k,d-1),\ldots,g_s(n,k,0)$ is weakly decreasing up
to $p$ and weakly increasing afterwards.  Its maximum is attained at an
endpoint. So
$g_s(n,k,d-h)\le \max\{g_s(n,k,d),g_s(n,k,0)\}$ for all $h$.

It remains to prove strictness at internal indices.  
Assume that  
$g_s(n,k,0)\ge g_s(n,k,d)$ and $g_s(n,k,0)>0$.  We show that
\begin{equation}\label{eq:19}
\Delta_{d-1}=g_s(n,k,0)-g_s(n,k,1)>0.
\end{equation}
If $s\ge 4$, then $d-(d-1)=1$, and substitution in the definition of
$g_s(n,k,d-h)$ gives
$g_s(n,k,1)=\binom{3d-1}{s}+\binom{3d-1}{s-1}=\binom{3d}{s}$.
Also $g_s(n,k,0)=\binom{3d+2}{s}$.  Since $g_s(n,k,0)>0$, we have
$\binom{3d+2}{s}>0$, and hence $\binom{3d}{s}<\binom{3d+2}{s}$.
Thus \eqref{eq:19} holds for $s\ge 4$.

Now let $s=3$.  In this case
$$g_s(n,k,d)=\binom d3+L\binom d2+t d
           =\binom d3+t d^2+\varepsilon\binom d2.$$
By  $g_s(n,k,0)\ge g_s(n,k,d)$, we have
\begin{equation}\label{eq:20}
 t\le
 \frac{\binom{3d+2}{3}-\binom d3-\varepsilon\binom d2}{d^2}.
\end{equation}
On the other hand,
$$g_s(n,k,1)=\binom{3d-1}{3}+\binom{d-1}{2}+(2d-1)t+\varepsilon(d-1).$$
The coefficient of $t$ in $g_s(n,k,0)-g_s(n,k,1)$ is $1-2d<0$.  
Hence the difference
$g_s(n,k,0)-g_s(n,k,1)$ is a decreasing  function of $t$.
Let $d'=\frac{\binom{3d+2}{3}-\binom d3-\varepsilon\binom d2}{d^2}$.
Then by \eqref{eq:20}, $t\le d'$.
So
$$g_s(n,k,0)-g_s(n,k,1)\ge \left(g_s(n,k,0)-g_s(n,k,1)\right)\big|_{t=d'}.
$$
The complete substitutions and factorizations 
are given in Appendix~\ref{app:endpoint-algebra}.  They yield
$$g_s(n,k,0)-g_s(n,k,1)\ge
\frac{(d-1)(26d^2-26d-3\varepsilon-4)}{6d}.$$
This lower bound is positive.  Indeed, $d-1>0$ and $6d>0$, while
$26d^2-26d-3\varepsilon-4\ge 26d^2-26d-7$.
For $d\ge 2$, the right-hand side is positive; at $d=2$ it is already
$26\cdot 4-26\cdot 2-7=45>0$, and it increases for larger $d$.  Hence
$g_s(n,k,0)-g_s(n,k,1)>0$.  Thus
$\Delta_{d-1}=g_s(n,k,0)-g_s(n,k,1)>0$ also holds in the case $s=3$.

 Suppose on the contrary that
$g_s(n,k,d-h)=g_s(n,k,0)$ for some $h$.  Then
\begin{equation}\label{eq:21}
0=g_s(n,k,0)-g_s(n,k,d-h)=\sum_{i=h}^{d-1}\Delta_i.
\end{equation}
By \eqref{eq:19}, the last term $\Delta_{d-1}$ is positive.  Let $p$ be the first index such that $\Delta_{p}\ge 0$. If $h\ge p$, then every term in \eqref{eq:21} is nonnegative by \eqref{eq:18} and the last one is
positive, a contradiction.  Hence $h<p$.  Then all differences  $\Delta_{h'}$ with $h'\le h$ are
negative, and so $g_s(n,k,d-h)-g_s(n,k,d)=\sum_{i=0}^{h-1}\Delta_i<0$.
This gives $g_s(n,k,d)>g_s(n,k,d-h)=g_s(n,k,0)$, a contradiction to the assumption
$g_s(n,k,0)\ge g_s(n,k,d)$.  Therefore no internal $q$ satisfies
$g_s(n,k,q)=g_s(n,k,0)$ when $g_s(n,k,0)\ge g_s(n,k,k-1)$ and
$g_s(n,k,0)>0$.

Now  assume that $g_s(n,k,d)>g_s(n,k,0)$, and suppose that
$g_s(n,k,d-h)=g_s(n,k,d)$ for some $0<h<d$.  Then
$0=g_s(n,k,d-h)-g_s(n,k,d)=\sum_{i=0}^{h-1}\Delta_i$.
So there is some $i<h$ such that $\Delta_i\ge 0$.  Applying
\eqref{eq:18} to this index $i$ shows that every $\Delta_j$ with
$j\ge h$ is nonnegative.  Hence
$g_s(n,k,0)-g_s(n,k,d-h)=\sum_{j=h}^{d-1}\Delta_j\ge 0$,
which contradicts $g_s(n,k,d)>g_s(n,k,0)$.  Thus no internal $q$ satisfies
$g_s(n,k,q)=g_s(n,k,k-1)$ when $g_s(n,k,k-1)>g_s(n,k,0)$.  
This completes the proof of Claim \ref{clm:fan}.
\end{proof}

If $q=0$, then
$$g_s(n,k,0)=\binom{3k-1}{s}=h_s(n,k,0).$$
If $q=k-1$, then by
$n-k+1=2t+\varepsilon$, we have
$$
\begin{aligned}
g_s(n,k,k-1)
&=\binom{k-1}{s}
 +(n-k+1)\binom{k-1}{s-1}
 +\left\lfloor\frac{n-k+1}{2}\right\rfloor
  \binom{k-1}{s-2}=h_s(n,k,k-1),
\end{aligned}
$$
which is exactly the number of $K_s$'s in
$K_{k-1}+M_{n-k+1}\cong H(n,k,k-1)$.
Thus, in the case $h_2(m,\ell,0)\le h_2(m,\ell,\ell-1)$, by Claim~\ref{clm:fan}, 
$N(K_s,G)\le \max\{h_s(n,k,0),h_s(n,k,k-1)\}.$
Combining the two cases $h_2(m,\ell,\ell-1)\le h_2(m,\ell,0)$ and  $h_2(m,\ell,0)\le h_2(m,\ell,\ell-1)$, we get 
$N(K_s,G)\le \max\{h_s(n,k,0),h_s(n,k,k-1)\}$.

\subsection{Characterization of the extremal graphs}\label{subsec:extremal}

Now we characterize the extremal graphs. Assume that $n\ge 3k$ and $3\le s\le 3k-1$.  Let
$$
\mathcal E_0=\left\{G:K_{3k-1}\cup I_{n-3k+1}\subseteq G
\subseteq H(n,k,0)\right\}
$$
and $\mathcal E_1=\left\{G:G\subseteq H(n,k,k-1)\right\}$.
We prove that every extremal graph belongs to $\mathcal E_0\cup\mathcal E_1$.

Let $G$ be extremal.  Add edges, without creating a copy of $kP_3$, until an
edge-maximal extremal graph $G^*$ is obtained.  Then
$G\subseteq G^*$ and $N(K_s,G)=N(K_s,G^*)$.
Write
$G^*=K_q+H$, where $0\le q\le k-1$ and $\ell=k-q$.  Let
$\gamma=\max\{h_s(n,k,0),h_s(n,k,k-1)\}$.
We first exclude the internal values $0<q<k-1$.  If
$h_2(m,\ell,0)\ge h_2(m,\ell,\ell-1)$, then Claim~\ref{clm:H-count} gives
$N(K_s,G^*)\le h_s(n,k,q)<\gamma$.
If $h_2(m,\ell,\ell-1)\ge h_2(m,\ell,0)$, then, by
Claim~\ref{clm:fan} and the identities
$g_s(n,k,0)=h_s(n,k,0)$ and $g_s(n,k,k-1)=h_s(n,k,k-1)$, we have
$$N(K_s,G^*)\le g_s(n,k,q)<\max\{g_s(n,k,0),g_s(n,k,k-1)\}=\gamma.
$$
In either case $N(K_s,G^*)<\gamma$, contradicting extremality.  Hence
$q\in\{0,k-1\}$.

If $q=k-1$, then $\ell=1$ and $H$ is $P_3$-free.  Edge-maximality gives
$H=M_{n-k+1},$
for otherwise we could add an edge inside $H$ and still keep $H$ $P_3$-free.
Thus
$G^*=K_{k-1}+M_{n-k+1}\cong H(n,k,k-1)$.
 So $G\in\mathcal E_1$.

If $q=0$, then $\ell=k$ and $G^*=H$. Since $G^*$ is extremal,
$$\gamma=N(K_s,H)
\leq \sum_i\binom{3a_i+2}{s}
\leq \binom{3k-1}{s}
=h_s(n,k,0)
\leq \gamma.$$
Thus equality holds throughout this chain.  We claim that exactly one $a_i$ is
positive.  Otherwise, let 
$a_{i_1},\ldots,a_{i_t}$ be positive with $t\ge2$, and put
$c_j=3\sum_{h=1}^j a_{i_h}+2$.  Let $j\ge2$ be the least index with
$c_j\ge s$.  Then $c_{j-1}\ge5$, $3a_{i_j}+2\ge5$, and
$s\le c_{j-1}+(3a_{i_j}+2)-2$.  By
Lemma~\ref{lem:merge}, 
$$\sum_i\binom{3a_i+2}{s}<\binom{3\sum_i a_i+2}{s}
=\binom{3k-1}{s},$$
a contradiction.  Hence there is a unique index $i_0$ with
$a_{i_0}=k-1$.  By Lemma~\ref{lem:local},
$D_{i_0}=K_{3k-1}$, while every other component is $K_1$ or $K_2$.  Therefore
$H=K_{3k-1}\cup F$, where $F$ is a matching together with possibly some
isolated vertices.

If $F\ne M_{n-3k+1}$, then $F$ has two isolated vertices. Adding the
edge between them leaves $F$ $P_3$-free, and hence leaves
$K_{3k-1}\cup F$ $kP_3$-free, contrary to the edge-maximality of $G^*$.
Thus $F=M_{n-3k+1}$ and $G^*=H(n,k,0)$.

Since $G\subseteq G^*$ and $N(K_s,G)=N(K_s,G^*)$, no edge of
$K_{3k-1}$ can be absent from $G$, because every such edge lies in
$\binom{3k-3}{s-2}>0$ copies of $K_s$.  An edge of $M_{n-3k+1}$, on the
other hand, lies in no copy of $K_s$ because $s\ge3$.  Hence
$K_{3k-1}\cup I_{n-3k+1}\subseteq G\subseteq H(n,k,0)$,
so $G\in\mathcal E_0$.
This completes
the proof of Theorem~\ref{thm:main}.\qed

\clearpage
\appendix
\section{Detailed algebraic derivations}\label{app:algebra}

For ease of verification, this appendix records the algebraic steps that are
compressed in the main proof.  We use throughout the convention
$\binom ab=0$ whenever $a$ is a nonnegative integer and either $b<0$ or
$b>a$.

\subsection{The first and second differences of \texorpdfstring{$h_s(n,k,q)$}{h(s,n,k,q)}}\label{app:h-differences}

By the definitions above, we have
$m_{q+1}=m_q-2$, $a_{q+1}=a_q+2$ and $f_{q+1}=f_q+1$.
Thus
$$\begin{aligned}
\Delta_q
&=h_s(n,k,q+1)-h_s(n,k,q)=\binom{m_q-2}{s}-\binom{m_q}{s} +(a_q+2)\binom{q+1}{s-1}-a_q\binom q{s-1}\\
&\quad +(f_q+1)\binom{q+1}{s-2}-f_q\binom q{s-2}.
\end{aligned}$$
By Pascal's identity  in Lemma \ref{lem:binomial-identities},
$$\binom{m_q-2}{s}-\binom{m_q}{s}
=-\binom{m_q-1}{s-1}-\binom{m_q-2}{s-1}.$$
For the $a_q$-terms, Pascal's identity gives
$$\begin{aligned}
&(a_q+2)\binom{q+1}{s-1}-a_q\binom q{s-1}=(a_q+2)
 \left(\binom q{s-1}+\binom q{s-2}\right)
 -a_q\binom q{s-1}\\
&\quad\quad\quad~~~~~~~~~~~~~~~~~~~~~~~~~~~~~~~=2\binom q{s-1}+(a_q+2)\binom q{s-2}.
\end{aligned}$$
Similarly,
$$\begin{aligned}
&(f_q+1)\binom{q+1}{s-2}-f_q\binom q{s-2}=(f_q+1)
 \left(\binom q{s-2}+\binom q{s-3}\right)
 -f_q\binom q{s-2}\\
&\quad~~~~~~~~~~~~~~~~~~~~~~~~~~~~~~~~~~~~=\binom q{s-2}+(f_q+1)\binom q{s-3}.
\end{aligned}$$
Hence
$$\Delta_q=-\binom{m_q-1}{s-1}-\binom{m_q-2}{s-1}
+2\binom q{s-1}+(a_q+3)\binom q{s-2}
+(f_q+1)\binom q{s-3}.$$

We now calculate the second difference.  Replacing $q$ by $q+1$ in the
preceding formula gives
$$\begin{aligned}
\Delta_{q+1}
&=-\binom{m_q-3}{s-1}-\binom{m_q-4}{s-1}
 +2\binom{q+1}{s-1} +(a_q+5)\binom{q+1}{s-2}
 +(f_q+2)\binom{q+1}{s-3}.
\end{aligned}$$
For the terms involving $m_q$, Pascal's identity gives
$$\begin{aligned}
&\binom{m_q-1}{s-1}+\binom{m_q-2}{s-1}
 -\binom{m_q-3}{s-1}-\binom{m_q-4}{s-1}=\binom{m_q-2}{s-2}+2\binom{m_q-3}{s-2}+\binom{m_q-4}{s-2}.
\end{aligned}$$
The remaining terms satisfy
$$2\left(\binom{q+1}{s-1}-\binom q{s-1}\right)
=2\binom q{s-2},$$
$$\begin{aligned}
&(a_q+5)\binom{q+1}{s-2}-(a_q+3)\binom q{s-2}=2\binom q{s-2}+(a_q+5)\binom q{s-3},
\end{aligned}$$
and
$$\begin{aligned}
&(f_q+2)\binom{q+1}{s-3}-(f_q+1)\binom q{s-3}=\binom q{s-3}+(f_q+2)\binom q{s-4}.
\end{aligned}$$
Hence
$$\begin{aligned}
\Delta_{q+1}-\Delta_q
&=\binom{m_q-2}{s-2}+2\binom{m_q-3}{s-2}
  +\binom{m_q-4}{s-2}+4\binom q{s-2}+(a_q+6)\binom q{s-3}+(f_q+2)\binom q{s-4}.
\end{aligned}$$

\subsection{The polynomial difference in Claim~\ref{clm:fan}}\label{app:R-difference}

Recall that
$$c_h=\binom h2+\varepsilon h-\binom{3h+2}{2}.$$
Expanding the binomial coefficients gives
$$\begin{aligned}
c_h
&=\frac{h(h-1)}2+\varepsilon h
  -\frac{(3h+2)(3h+1)}2=-4h^2+(\varepsilon-5)h-1.
\end{aligned}$$
Since
$R_h(x)=(1+x)^{3h+2}+(\varepsilon-2h-2)x+c_hx^2,$
we have
$$\begin{aligned}
(1+x)R_h(x)
&=(1+x)^{3h+3}+(\varepsilon-2h-2)x +(c_h+\varepsilon-2h-2)x^2+c_hx^3,
\end{aligned}$$
whereas
$R_{h+1}(x)=(1+x)^{3h+5}+(\varepsilon-2h-4)x+c_{h+1}x^2.$
Furthermore,
$c_{h+1}-c_h=-8h+\varepsilon-9,$
and hence
$(c_{h+1}-c_h)-(\varepsilon-2h-2)=-(6h+7).$
So
$$\begin{aligned}
R_{h+1}(x)-(1+x)R_h(x)
&=(1+x)^{3h+5}-(1+x)^{3h+3}-2x-(6h+7)x^2-c_hx^3.
\end{aligned}$$
Also,
$$\begin{aligned}
(1+x)^{3h+5}-(1+x)^{3h+3}
&=(1+x)^{3h+3}\bigl((1+x)^2-1\bigr)=x\bigl((1+x)^{3h+4}+(1+x)^{3h+3}\bigr).
\end{aligned}$$
The coefficients of $x$ and $x^2$ in the last expression are $2$ and
$(3h+4)+(3h+3)=6h+7$, so they cancel the corresponding negative terms.
The coefficient of $x^3$ after cancellation is
$$\begin{aligned}
&\binom{3h+4}{2}+\binom{3h+3}{2}-c_h=9h^2+18h+9-\bigl(-4h^2+(\varepsilon-5)h-1\bigr)\\
&\quad~~~~~~~~~~~~~~~~~~~~~~~~~~~~~~~~~~=13h^2+(23-\varepsilon)h+10=Q_h.
\end{aligned}$$
  Consequently,
$$R_{h+1}(x)-(1+x)R_h(x)
=Q_hx^3+
\sum_{j\ge3}
\left(\binom{3h+4}{j}+\binom{3h+3}{j}\right)x^{j+1}.$$

\subsection{Threshold and binomial-ratio calculations}\label{app:threshold-algebra}

To prove \eqref{eq:15}, it is enough to show that $$\frac{(z+1)(z+2)(z+3)}{(z-2)(z-1)z}
\ge
\frac{2h+3}{2h+1}.$$
For the
case $z=3h+3$,
$$
\begin{aligned}
&(2h+1)(3h+4)(3h+5)(3h+6) -(2h+3)(3h+1)(3h+2)(3h+3)\\&=3(36h^3+117h^2+117h+34)>0,
\end{aligned}
$$
and for the case $z=3h+4$,
$$
\begin{aligned}
&(2h+1)(3h+5)(3h+6)(3h+7) -(2h+3)(3h+2)(3h+3)(3h+4)\\
&=3(36h^3+135h^2+153h+46)>0.
\end{aligned}
$$
This proves the two monotonicity assertions used there.

\subsection{Endpoint substitutions and the \texorpdfstring{$s=3$}{s=3} factorization}\label{app:endpoint-algebra}
If $q=0$, then $g_s(n,k,0)=\binom{3d+2}{s}$.
  Write
$L=2t+\varepsilon$, where $\varepsilon\in\{0,1\}$.  Direct substitution gives
$$\begin{aligned}
g_3(n,k,d)
&=\binom d3+L\binom d2+td=\binom d3+td^2+\varepsilon\binom d2,
\end{aligned}$$
while
$g_3(n,k,0)=\binom{3d+2}{3}$
and
$$g_3(n,k,1)=\binom{3d-1}{3}+\binom{d-1}{2}
 +(2d-1)t+\varepsilon(d-1).$$
The assumption $g_3(n,k,0)\ge g_3(n,k,d)$ gives
$$t\le d'=\frac{\binom{3d+2}{3}-\binom d3-
\varepsilon\binom d2}{d^2}.$$
Using
$$\binom{3d+2}{3}=\frac92d^3+\frac92d^2+d,
\quad
\binom d3=\frac16d^3-\frac12d^2+\frac13d,
\quad
\binom d2=\frac12d^2-\frac12d,$$
we obtain
$$d'=\frac{26d^2-3d\varepsilon+30d+3\varepsilon+4}{6d}.
$$
Moreover,
$$\binom{3d+2}{3}-\binom{3d-1}{3}-\binom{d-1}{2}
=13d^2-3d.$$
Let
$D(t)=g_3(n,k,0)-g_3(n,k,1)$. Then
$D(t)=13d^2-3d-(2d-1)t-\varepsilon(d-1).$
Since $D(t)$ is decreasing and $t\le d'$, we have $D(t)\ge D(d')$.  Finally,
$$\begin{aligned}
D(d')
&=\frac{1}{6d}\Bigl(
 6d\bigl(13d^2-3d-\varepsilon(d-1)\bigr)-(2d-1)
 (26d^2-3d\varepsilon+30d+3\varepsilon+4)
 \Bigr)\\
&=\frac{26d^3-52d^2-3d\varepsilon+22d+3\varepsilon+4}{6d}\\
&=\frac{(d-1)(26d^2-26d-3\varepsilon-4)}{6d},
\end{aligned}
$$
which is the factorization used in the main proof.

\end{document}